\newtheorem{thm}{Theorem}
\newtheorem{lem}[thm]{Lemma}
\newtheorem{prop}[thm]{Proposition}
\newtheorem{cor}[thm]{Corollary}
\theoremstyle{remark}
\newcommand{\cv}{\mathbf{C}}
\newcommand{\pv}{\mathbf{P}}
\newcommand{\zv}{\mathbf{Z}}
\numberwithin{thm}{section}
\begin{document}

\keywords{Latt\`es maps, orbifolds, crystallographic groups}

\title{Latt\`es Maps on $\pv^2$}

\author{Feng Rong}

\address{Department of Mathematics, Shanghai Jiao Tong University, 800 Dong Chuan Road, Shanghai, 200240, P.R. China}
\email{frong@syr.edu}

\maketitle

\textit{Abstract}

In [J. Milnor, On Latt\`es maps, Dynamics on the Riemann sphere, European Mathematical Society, Z\"urich, 2006, 9-43], Milnor gave a classification of Latt\`es maps on $\pv^1$. In this paper, we give a classification of Latt\`es maps on $\pv^2$.

\textit{R\'esum\'e}

Dans [J. Milnor, On Latt\`es maps, Dynamics on the Riemann sphere, European Mathematical Society, Z\"urich, 2006, 9-43], Milnor a donn\'e une classification des applications de Latt\`es sur $\pv^1$. Dans cet article, nous donnons une classification des applications de Latt\`es sur $\pv^2$.
 
\section{Introduction}\label{S:Intro}

A \textit{Latt\`es} map on $\pv^k$ is, by definition, a holomorphic map $f$ on $\pv^k$ such that the following diagram commutes:
$$\xymatrix{
\ E^k\  \ar[d]_\Psi \ar[r]^L & \ E^k\  \ar[d]^\Psi \\
\ \pv^k\  \ar[r]^f & \ \pv^k, }$$
where $E^k$ is a complex torus of dimension $k$, $L$ is an affine map on $E^k$ and $\Psi$ is a holomorphic map from $E^k$ onto $\pv^k$.

Latt\`es maps on $\pv^k$ have very interesting and unique dynamical properties, which have been studied by many authors (see e.g. \cite{Z:Lattes}, \cite{BL:Lattes1}, \cite{BL:Lattes2}, \cite{BD:Lattes}, \cite{Di:Lattes}, \cite{Du:Lattes}). In this note, however, we focus on the classification of Latt\`es maps on $\pv^2$, in the spirit of \cite{M:Lattes} which studies such maps on $\pv^1$.

To state our main result, we need a couple of definitions.

Let $g$ be a holomorphic map on $\pv^1$ and $f$ be a holomorphic map on $\pv^2$. If there exists a holomorphic map $\pi:\pv^1\times\pv^1\rightarrow\pv^2$ such that $f\circ\pi=\pi\circ(g,g)$, then we call $f$ the \textit{square map} of $g$.

An \textit{algebraic web} is given by a reduced curve $C\subset \check{\pv}^2$, where $\check{\pv}^2$ is the dual projective plane consisting of lines in $\pv^2$. The web is invariant for a holomorphic map $f$ on $\pv^2$ if every line in $\pv^2$ belonging to $C$ is mapped to another such line. A detailed study of such invariant webs has been carried out in \cite{DJ:Webs}.

The goal of this paper is to prove the following

\begin{thm}\label{T:Main}
If $f$ is a Latt\`es map on $\pv^2$, then either $f$ or a suitable iteration of $f$ is one of the following:\\
$(i)$ a square map of a Latt\`es map on $\pv^1$;\\
$(ii)$ a holomorphic map preserving an algebraic web associated to a smooth cubic.
\end{thm}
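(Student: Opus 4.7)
The plan is to extract the group-theoretic content of the defining diagram and reduce to the classification of finite crystallographic complex reflection groups of rank $2$. Since $\Psi:E^2\to\pv^2$ is a surjective holomorphic map between compact complex manifolds of the same dimension, it is a finite branched covering; let $G$ denote the group of automorphisms $\sigma$ of $E^2$ satisfying $\Psi\circ\sigma=\Psi$. Every holomorphic self-map of a complex torus is affine, so $G$ is a finite group of affine transformations and $\pv^2\cong E^2/G$ as complex orbifolds. Writing $\sigma(z)=A_\sigma z+t_\sigma$, the map $\sigma\mapsto A_\sigma$ yields a finite linear group $G_0\subset GL(2,\cv)$ (the holonomy), while the translation parts, together with the period lattice $\Lambda$ of $E^2$, make $G$ a crystallographic group on $\cv^2$.

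Next I would invoke the Chevalley--Shephard--Todd theorem: since $\pv^2$ is smooth, the stabilizer at every point of $E^2$, and in particular $G_0$ itself, must be generated by complex (pseudo-)reflections. The further requirement that $G_0$ preserve a complex lattice in $\cv^2$ is very restrictive and cuts the Shephard--Todd list to a short enumeration. Either $G_0$ acts reducibly on $\cv^2$, in which case one shows, after possibly enlarging by the swap, that $G_0\subset H\wr S_2$ for a cyclic reflection group $H$ and $E^2$ is isogenous to a product $E\times E$ respected by $G$; or $G_0$ is irreducible, in which case it must be one of the exceptional crystallographic rank-$2$ complex reflection groups tied to the Eisenstein (or Gaussian) lattice, and $E^2$ is isogenous to $E\times E$ for a specific CM elliptic curve.

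In the reducible case, after replacing $f$ by an iterate so that the affine map $L$ commutes with the swap and hence splits diagonally as $(L_1,L_1)$, the quotient $E\times E\to(E\times E)/G=\pv^2$ factors through $(E/H)\times(E/H)=\pv^1\times\pv^1$ followed by the symmetrization $\pv^1\times\pv^1\to\mathrm{Sym}^2\,\pv^1=\pv^2$. This exhibits $f$ as the square map of the Latt\`es map on $\pv^1$ induced by $L_1$, which is conclusion $(i)$.

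In the irreducible case, the union of mirror hyperplanes of $G$ in $\cv^2$ descends to a finite configuration of lines in $\pv^2$; for each of the exceptional crystallographic complex reflection groups of rank $2$ this configuration is classically known to be dual to a smooth cubic in $\check{\pv}^2$, of Hesse or related type. Because $L$ normalizes $G$, the induced map $f$ permutes these lines and hence preserves the associated algebraic web, giving conclusion $(ii)$. The main obstacle is precisely this last step: one must, for each admissible irreducible pair $(G_0,\Lambda)$, pin down explicit coordinates on the quotient $\pv^2$ (via Eisenstein theta functions and the standard models) together with the configuration of branch lines, and verify by direct computation that every affine $L$ normalizing $G$ does descend to a map preserving this dual cubic.
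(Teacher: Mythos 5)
Your high-level skeleton matches the paper's: reduce to the classification of two-dimensional complex crystallographic groups with quotient $\pv^2$ (the paper simply cites Kaneko--Tokunaga--Yoshida for the six admissible pairs $(\Lambda,G)$ rather than rederiving them from Chevalley--Shephard--Todd), then analyze which affine maps descend. The genuine gap is your case division. All six admissible point groups --- $G(2,1,2)$, $G(3,1,2)$, $G(4,1,2)$, $G(6,1,2)$, $G(4,2,2)$ and $G(3,3,2)$ --- act \emph{irreducibly} on $\cv^2$ (none has a common eigenvector), so your ``reducible'' branch is vacuous and your argument never reaches conclusion $(i)$; and even on the charitable reading that you mean ``imprimitive, i.e.\ contained in $H\wr S_2$,'' the group $G(3,3,2)$ is also a group of monomial matrices inside $\mu_3\wr S_2$, so the dichotomy still fails to separate the square-map cases from the cubic-web case. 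The distinction that actually works is whether $G$ contains a normal subgroup $N$ of diagonal matrices with $E^2/N\cong\pv^1\times\pv^1$ and $G/N$ generated by the swap: this holds for $G(m,1,2)$, $m=2,3,4,6$ (with $N=R_m^2$), holds in a twisted form for $G(4,2,2)$ (whence only semi-conjugacy to an Ueda map), and fails for $G(3,3,2)$, where $\Psi$ is instead $(u,v)\mapsto$ the line joining $\eta(u)$ and $\eta(v)$ for a degree-$3$ embedding $\eta:E\to\check{\pv}^2$. Accordingly, the concrete claims inside each of your branches are false for some of the groups landing there: for $G(m,1,2)$ the mirror arrangement descends to a union of lines and a conic (see the lists of $(d_j,r_j)$ in Section 5), not to the dual of a smooth cubic, and the right conclusion is $(i)$, not $(ii)$; for $G(3,3,2)$ the mirrors descend to an irreducible sextic (the dual curve of the cubic), not to ``a finite configuration of lines.''

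Beyond the dichotomy, the two steps you defer as ``the main obstacle'' are where most of the paper's proof actually lives, and they cannot be waved through. First, it is not automatic that some iterate of $L$ ``splits diagonally'': one must determine, pair by pair, exactly which $A$ admit solutions of $L\circ g\equiv h\circ L \bmod\Lambda$ for all $g\in G$ (Propositions 3.1--3.6), and only from those explicit lists does one see, e.g., that in the $G(4,2,2)$ case there are sixteen non-diagonal types whose squares or cubes become diagonal or antidiagonal, that a diagonal $A=a\,\mathrm{diag}(1,\theta^j)$ already induces a map of the form $(g,g)$ without being scalar, and which iterate ($f$, $f^2$, $f^3$ or $f^6$) is needed in each case. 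Second, in the $G(3,3,2)$ case the invariance of the cubic web does not follow from $L$ normalizing $G$ alone: one needs the constraint on the translation part ($3e\equiv 0\bmod L(\tau)$) so that the induced map on $E$ preserves collinearity under the cubic embedding.
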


The paper is organized as follows. In Section \ref{S:Crystal}, we briefly review all possible maps $\Psi:E^2\rightarrow \pv^2$. In Section \ref{S:Affine}, we give a complete list of all possible types of affine maps on $E^2$ which induce Latt\`es maps on $\pv^2$. We then prove Theorem \ref{T:Main} in Section \ref{S:Class}. In Section \ref{S:Elliptic}, we recall some basic facts about elliptic functions and the notion of ``orbifold". Finally in Section \ref{S:Examples}, we give examples of typical types of Latt\`es maps on $\pv^2$.

\section{Complex Crystallographic Groups}\label{S:Crystal}

Let $E(n)$ be the complex motion group acting on $\cv^n$. A \textit{complex crystallographic group} is, by definition, a discrete subgroup of $E(n)$ with compact quotient. Let $U(n)$ be the unitary group of size $n$.

For $A\in U(2)$ and $r\in \cv^2$, let $(A|r)\in E(2)$ denote the transformation: $z\rightarrow Az+r$. For a two dimensional complex crystallographic group $\Gamma$, $R:=\{r;\ (1|r)\in\Gamma\}$ and $G:=\{A;\ (A|r)\in\Gamma\}$ are called the lattice and the point group of $\Gamma$, respectively. If $\Gamma$ has the representation $\{(A|r);\ A\in G,\ r\in R\}$, then $\Gamma$ is called the semidirect product $G\ltimes R$ of the lattice and the point group.

Let $G(m,p,2)\subset U(2)$ denote the group generated by
$$\left( \begin{array}{cc} & 1 \\ 1 & \end{array} \right),\ \left( \begin{array}{cc} & \theta \\ \theta^{-1} & \end{array} \right) \hbox{ and } \left( \begin{array}{cc} \theta^p & \\ & 1 \end{array} \right),\ \ \theta=e^{2\pi i/m}.$$

We have the following theorem from \cite{KTY:II}.

\begin{thm}\cite[Theorem 1]{KTY:II}
Every two dimensional complex crystallographic group $\Gamma$ with $\cv^2/\Gamma\cong \pv^2$ is conjugate, in the affine transformation group, to one of the following six groups,
\begin{align*}
(2,1)_0 & :=G(2,1,2)\ltimes L^2(\tau),\\
(3,1)_0 & :=G(3,1,2)\ltimes L^2(\zeta),\\
(4,1)_0 & :=G(4,1,2)\ltimes L^2(i),\\
(6,1)_0 & :=G(6,1,2)\ltimes L^2(\zeta),\\
(4,2)_1 & :=G(4,2,2)\ltimes \left\{L^2(i)+\zv\frac{1+i}{2}\left( \begin{array}{c} 1\\ 1 \end{array} \right)\right\},\\
(3,3)_0 & :=G(3,3,2)\ltimes \left\{L(\tau)\left( \begin{array}{c} -1 \\ 1 \end{array} \right) + L(\tau)\left( \begin{array}{c} \zeta^2 \\ \zeta \end{array} \right)\right\},
\end{align*}
where $L(\tau)=\zv+\tau\zv$, $L^2(\tau)=L(\tau)\left( \begin{array}{c} 1 \\ 0 \end{array} \right) + L(\tau)\left( \begin{array}{c} 0 \\ 1 \end{array} \right)$, \hbox{Im} $\tau>0$, $\zeta=e^{2\pi i/6}$.
\end{thm}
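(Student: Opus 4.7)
The plan is to combine the Chevalley--Shephard--Todd theorem with the Shephard--Todd classification of finite complex reflection subgroups of $U(2)$, and then match each admissible point group with the lattices that it stabilizes.

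First, decompose the quotient as $\cv^2/\Gamma \cong E^2/G$, where $E^2 = \cv^2/R$ is the torus defined by the lattice. Smoothness of $\pv^2$ forces, via Chevalley--Shephard--Todd applied locally at each fixed point, that every isotropy subgroup of the $\Gamma$-action on $\cv^2$ be generated by pseudo-reflections, i.e.\ elements of finite order fixing a hyperplane pointwise. The simple-connectivity of $\pv^2$ then requires $\Gamma$ itself to be generated by its translations together with its pseudo-reflections. Since translations alone produce a torus rather than $\pv^2$, passing to the point group shows that $G \subset U(2)$ is generated by pseudo-reflections and is thus a finite complex reflection group.

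Next, I would invoke the Shephard--Todd classification: irreducible finite complex reflection groups in $U(2)$ form the infinite family $G(m,p,2)$ with $p\mid m$ together with the exceptional groups $G_4,\ldots,G_{22}$. Stability of a rank-$4$ lattice $R \subset \cv^2$ under $G$ is an arithmetic constraint, forcing eigenvalues of elements of $G$ to be roots of unity lying in an imaginary quadratic field that admits a $G$-invariant lattice embedding. An order-$4$ rotation forces a Gaussian refinement of $L^2(i)$; an order-$3$ or $6$ rotation forces an Eisenstein refinement of $L^2(\zeta)$. A short case-check on the orders rules out every exceptional Shephard--Todd group and restricts the infinite family to $m \in \{2,3,4,6\}$. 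The four untwisted classes $(m,1)_0$ then arise from $G = G(m,1,2)$ paired with the obvious product lattice.

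Finally, for each candidate pair $(G,R)$ I would verify that $E^2/G$ is genuinely $\pv^2$ rather than a weighted projective plane, by computing the graded ring of $G$-invariant theta functions on $E^2$ and checking that it is a polynomial ring on three generators of the correct relative degrees. For the product-lattice cases this reduces to classical Weierstrass $\wp$-function calculations on each factor. The main obstacle, and the reason the two further cases $(4,2)_1$ and $(3,3)_0$ appear, is that certain reflection groups with $p > 1$ admit a non-split extension by a translation that shifts the lattice into a non-product form; for these one must show the twisted lattice is still $G$-stable, that the quotient is still $\pv^2$, and that the list of admissible twists is exhausted by the two cases above. Up to conjugation in the full affine group, this leaves exactly the six classes displayed in the statement.
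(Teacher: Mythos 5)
First, a point of order: the paper does not prove this statement at all --- it is quoted verbatim from \cite{KTY:II} as an external input, so there is no internal argument to measure your proposal against. Judged on its own terms, your outline follows the standard route (smoothness of the quotient forces point stabilizers to be generated by affine pseudo-reflections, simple connectivity forces $\Gamma$ to be generated by its point stabilizers, hence the point group is a finite complex reflection group, and one then classifies the invariant lattices), and that much is sound in spirit, though your phrasing ``generated by its translations together with its pseudo-reflections'' should really be ``generated by affine reflections,'' the translations arising as products of these.

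There are, however, two genuine gaps. First, the assertion that ``a short case-check on the orders rules out every exceptional Shephard--Todd group'' is false. The crystallographic restriction applied to a pseudo-reflection acting on a rank-$4$ lattice (eigenvalues $1,1,\eta,\bar\eta$ as an element of $GL_4(\zv)$, hence $\varphi(n)\le 2$) only forces reflection orders to lie in $\{2,3,4,6\}$, and several exceptional rank-two reflection groups are built entirely from such reflections and do preserve rank-$4$ lattices --- the tetrahedral and octahedral families are defined over $\mathbf{Q}(\zeta)$ and $\mathbf{Q}(i)$ and appear in Popov's classification of crystallographic complex reflection groups. They are absent from the list not because of an order count but because their quotients are weighted projective planes with nontrivial weights rather than $\pv^2$; that is, the exclusion happens precisely at the invariant-theoretic step you defer to the end, which is where the real content of the theorem lives and which your sketch does not carry out. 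Second, you assume from the outset that $\Gamma$ splits as $G\ltimes R$ and describe the cases $(4,2)_1$ and $(3,3)_0$ as ``non-split extensions by a translation''; in fact both are exhibited in the statement as semidirect products, the novelty being a $G$-invariant lattice that is not a product lattice (an index-two refinement of $L^2(i)$, respectively the hexagonal lattice $\Lambda_6$). Enumerating the $G$-invariant lattices up to affine equivalence and the admissible translation parts (a computation in $H^1(G,\cv^2/R)$ modulo conjugation) is a separate finite but nontrivial classification, and it is asserted rather than performed. As it stands the proposal is a plausible roadmap to the Kaneko--Tokunaga--Yoshida theorem, not a proof of it.
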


A two-dimensional complex torus $E^2$ is given as a quotient $\cv^2/\Lambda$, with $\Lambda\subset \cv^2$ a lattice. An easy consequence of the above theorem is the following

\begin{cor}\label{C:Quotient}
If $(\Lambda,G)$ is a pair such that $E^2=\cv^2/\Lambda$ and $E^2/G\cong \pv^2$, then, up to conjugation, it is one of the following
\begin{align*}
\Lambda=L^2(\tau), & \ G=G_1:=G(2,1,2),\\
\Lambda=L^2(\zeta), & \ G=G_2:=G(3,1,2),\\
\Lambda=L^2(i), & \ G=G_3:=G(4,1,2),\\
\Lambda=L^2(\zeta), & \ G=G_4:=G(6,1,2),\\
\Lambda=L^2(i), & \ G=G_5:=G(4,2,2)\ltimes \zv\frac{1+i}{2}\left( \begin{array}{c} 1\\ 1 \end{array} \right),\\
\Lambda=\Lambda_6:=L(\tau)\left( \begin{array}{c} -1 \\ 1 \end{array} \right) + L(\tau)\left( \begin{array}{c} \zeta^2 \\ \zeta \end{array} \right), & \ G=G_6:=G(3,3,2).
\end{align*}
\end{cor}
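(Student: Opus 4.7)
The plan is to derive the corollary from the preceding theorem by associating, to the given pair $(\Lambda,G)$, a two-dimensional complex crystallographic group $\Gamma$ acting on $\cv^2$, and then reading off $(\Lambda,G)$ from the classification. Each $g \in G$ is a holomorphic automorphism of $E^2$ and lifts to an affine transformation of $\cv^2$, unique up to translation by an element of $\Lambda$. Let $\Gamma \subset \mathrm{Aff}(\cv^2)$ be the subgroup generated by translations by $\Lambda$ together with one chosen lift of each $g \in G$; equivalently, $\Gamma$ is the preimage of $G$ under the natural projection from those affine maps of $\cv^2$ that cover an automorphism of $E^2$. It sits in a short exact sequence $1 \to \Lambda \to \Gamma \to G \to 1$, is therefore discrete, and satisfies $\cv^2/\Gamma = E^2/G \cong \pv^2$.

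Because the linear parts of elements of $\Gamma$ form a finite subgroup of $GL(2,\cv)$, averaging any positive Hermitian form on $\cv^2$ produces a $\Gamma$-invariant one. After a linear change of coordinates making this form standard, we may assume $\Gamma \subset E(2)=U(2)\ltimes\cv^2$, so that $\Gamma$ is a two-dimensional complex crystallographic group with $\cv^2/\Gamma\cong\pv^2$. By the preceding theorem, $\Gamma$ is then affinely conjugate to one of the six listed groups $(2,1)_0, (3,1)_0, (4,1)_0, (6,1)_0, (4,2)_1, (3,3)_0$.

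It remains to recover $(\Lambda,G)$ from $\Gamma$. Denote by $R$ the translation lattice of $\Gamma$; then $\Lambda \subseteq R$ is a sublattice preserved by the point group, and $G \cong \Gamma/\Lambda$ acting on $E^2 = \cv^2/\Lambda$. For the four groups $(m,1)_0$ the translation lattice is already the product $L^2(\tau)$ and one takes $\Lambda=L^2(\tau)$, $G=G(m,1,2)$, yielding $G_1,\ldots,G_4$; for $(3,3)_0$ one takes $\Lambda=\Lambda_6$, $G=G(3,3,2)$. The one case requiring nontrivial work is $(4,2)_1$, where $R = L^2(i) + \zv\tfrac{1+i}{2}\binom{1}{1}$ is not a product lattice; since the generators of $G(4,2,2)$ are (signed) permutation matrices with entries in $\{\pm 1,\pm i\}$, they preserve the sublattice $L^2(i) \subset R$ of index two, and we may therefore take $\Lambda=L^2(i)$, absorbing the extra half-period $\tfrac{1+i}{2}\binom{1}{1}$ into the point group to obtain $G_5 = G(4,2,2)\ltimes\zv\tfrac{1+i}{2}\binom{1}{1}$. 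The main obstacle is precisely this last step: distinguishing the crystallographic translation lattice $R$ from the torus lattice $\Lambda$, and observing that the standard product sublattice $L^2(i)$ remains invariant under the point group so that the resulting extension is exactly $G_5$.
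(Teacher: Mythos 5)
The paper offers no written proof of this corollary, presenting it as an immediate consequence of the Kaneko--Tokunaga--Yoshida classification, and your argument is exactly the deduction the paper has in mind: pass from $(\Lambda,G)$ to the crystallographic group $\Gamma$ sitting in $1\to\Lambda\to\Gamma\to G\to 1$, unitarize by averaging so the theorem applies, and read off the pairs. Your handling of the $(4,2)_1$ case, where the translation lattice of $\Gamma$ is strictly larger than the torus lattice $\Lambda$ and the half-period must be absorbed into the point group to produce $G_5$, correctly supplies the one genuinely nontrivial step that the paper leaves implicit.
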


\section{Affine Maps}\label{S:Affine}

In this section, we determine all affine maps $L$ on $E^2$ which induce Latt\`es maps on $\pv^2$.

Write $L=(A|r)$, where
$$A=\left(\begin{array}{cc} a & b \\ c & d \\ \end{array}\right),\ \ \ r=\left(\begin{array}{c} e \\ f \end{array}\right),$$
with $a,b,c,d,e,f\in \cv$.

Let $(\Lambda,G)$ be one of the pairs in Corollary \ref{C:Quotient}. If $L$ induces a Latt\`es map $f$ on $\pv^2$, then we have $f\circ\Psi=\Psi\circ L$. Let $\pi:\cv^2\rightarrow E^2$ be the quotient map and set $\psi=\Psi\circ\pi$.  Consider $l=(A|r)$ as an affine map on $\cv^2$, we get $f\circ\psi=\psi\circ l$. For $z\in \cv^2$ and $\lambda\in \Lambda$ we then have
$$\psi\circ(Az+r)=f\circ\psi(z)=f\circ\psi(z+\lambda)=\psi\circ(A(z+\lambda)+r).$$
This shows that
\begin{equation}\label{E:A1}
A\Lambda\subset \Lambda.
\end{equation}

Since $L$ carries a small domain of volume $V$ to a domain of volume $|\hbox{det} A|^2V$, it follows that the map $L$ has degree $|\hbox{det} A|^2$. Every non-constant holomorphic map $f$ on $\pv^2$ has the form $[P:Q:R]$, where $P,Q,R$ are homogeneous polynomials of degree $d_f$ (see \cite{FS:DynamicsI}). We call $d_f$ the algebraic degree of $f$ and the (topological) degree of $f$ is $d_f^2$. Since $f\circ\Psi=\Psi\circ L$, we get $d_f^2=|\hbox{det} A|^2$, and thus
\begin{equation}\label{E:A2}
|\hbox{det} A|=d_f.
\end{equation}

By definition, an affine map $L$ on $E^2=\cv^2/\Lambda$ induces a Latt\`es map $f$ on $\pv^2\cong E^2/G$ if and only if
\begin{equation}\label{E:L/G}
\hbox{for any } g\in G, \hbox{ there exists } h\in G \hbox{ such that } L\circ g\equiv h\circ L \hbox{ mod } \Lambda.
\end{equation}
For $L=(A|r)$, $g=(B|r^\prime)$ and $h=(C|r^{\prime\prime})$, we have $L\circ g=(AB|Ar^\prime+r)$ and $h\circ L=(CA|Cr+r^{\prime\prime})$. Therefore $L\circ g\equiv h\circ L \hbox{ mod } \Lambda$ is equivalent to $AB=CA$ and $Ar^\prime+r\equiv Cr+r^{\prime\prime} \hbox{ mod } \Lambda$. Here, we make two simple yet important observations:\\
1. condition (\ref{E:L/G}) is satisfied if and only if it is satisfied for all generators of $G$;\\
2. if $g=(B|r^\prime)$ and $h=(C|r^{\prime\prime})$ satisfy $L\circ g\equiv h\circ L \hbox{ mod } \Lambda$, then $B$ and $C$ are of the same order.

The rest of this section is devoted to the investigation of condition (\ref{E:L/G}) for each pair $(\Lambda,G)$ in Corollary \ref{C:Quotient}.

We start with the easiest case $(\Lambda,G_6)$, for which we have the following

\begin{prop}\label{P:G6}
If $L=(A|r)$ induces a Latt\`es map in the case $(\Lambda,G_6)$, then $L$ takes one of the following six forms:

$1) \ A=a\left( \begin{array}{cc} 1 & \\ & 1 \end{array} \right),\ r=\left( \begin{array}{c} e \\ f \end{array} \right)\equiv \left( \begin{array}{c} f \\ e \end{array} \right)\equiv \left( \begin{array}{c} \zeta^2 f \\ \zeta^{-2}e \end{array} \right) \hbox{ mod } \Lambda_6$;

$2) \ A=a\left( \begin{array}{cc} 1 & \\ & \zeta^{-2} \end{array} \right),\ r=\left( \begin{array}{c} e \\ f \end{array} \right)\equiv \left( \begin{array}{c} \zeta^{-2}f \\ \zeta^2 e \end{array} \right)\equiv \left( \begin{array}{c} \zeta^2 f \\ \zeta^{-2}e \end{array} \right) \hbox{ mod } \Lambda_6$;

$3) \ A=a\left( \begin{array}{cc} 1 & \\ & \zeta^2 \end{array} \right),\ r=\left( \begin{array}{c} e \\ f \end{array} \right)\equiv \left( \begin{array}{c} f \\ e \end{array} \right)\equiv \left( \begin{array}{c} \zeta^{-2}f \\ \zeta^2 e \end{array} \right) \hbox{ mod } \Lambda_6$;

$4) \ A=b\left( \begin{array}{cc} & 1 \\ 1 & \end{array} \right),\ r=\left( \begin{array}{c} e \\ f \end{array} \right)\equiv \left( \begin{array}{c} f \\ e \end{array} \right)\equiv \left( \begin{array}{c} \zeta^{-2}f \\ \zeta^2 e \end{array} \right) \hbox{ mod } \Lambda_6$;

$5) \ A=b\left( \begin{array}{cc} & 1 \\ \zeta^{-2} & \end{array} \right),\ r=\left( \begin{array}{c} e \\ f \end{array} \right)\equiv \left( \begin{array}{c} f \\ e \end{array} \right)\equiv \left( \begin{array}{c} \zeta^2 f \\ \zeta^{-2}e \end{array} \right) \hbox{ mod } \Lambda_6$;

$6) \ A=b\left( \begin{array}{cc} & 1 \\ \zeta^2 & \end{array} \right),\ r=\left( \begin{array}{c} e \\ f \end{array} \right)\equiv \left( \begin{array}{c} \zeta^{-2}f \\ \zeta^2 e \end{array} \right)\equiv \left( \begin{array}{c} \zeta^2 f \\ \zeta^{-2}e \end{array} \right) \hbox{ mod } \Lambda_6$.
\end{prop}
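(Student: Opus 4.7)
The strategy rests on three observations. First, by observation 1 in Section \ref{S:Affine} it suffices to impose condition (\ref{E:L/G}) on a set of generators of $G_6$. Second, since $G_6 = G(3,3,2)$ contains no nontrivial translations (Corollary \ref{C:Quotient}), every $g \in G_6$ has the form $(B|0)$ and the compatibility equation $Ar'+r \equiv Cr+r'' \pmod{\Lambda_6}$ collapses to $r \equiv Cr \pmod{\Lambda_6}$. Third, by observation 2 the companion matrix $C$ must have the same order as the generator $B$. Setting $B_1 := \left(\begin{smallmatrix} 0 & 1 \\ 1 & 0 \end{smallmatrix}\right)$ and $B_2 := \left(\begin{smallmatrix} 0 & \zeta^2 \\ \zeta^{-2} & 0 \end{smallmatrix}\right)$ for the two generators of $G_6$ listed in Section \ref{S:Crystal}, the three order-two elements of $G_6$ are $B_1$, $B_2$, and $B_3 := \left(\begin{smallmatrix} 0 & \zeta^{-2} \\ \zeta^2 & 0 \end{smallmatrix}\right)$.

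First I would enumerate the nine candidate pairs $(C_1, C_2) \in \{B_1, B_2, B_3\}^2$. Writing $A = \left(\begin{smallmatrix} a & b \\ c & d \end{smallmatrix}\right)$, each pair converts the two conjugation relations $AB_1 = C_1 A$ and $AB_2 = C_2 A$ into a system of scalar equations on $a,b,c,d$. For the three pairs with $C_1 = C_2$, subtracting the two relations gives $A(B_1 - B_2) = 0$; since $\det(B_1 - B_2) = -(1-\zeta^2)(1-\zeta^{-2}) = -3 \neq 0$, the matrix $B_1 - B_2$ is invertible, forcing $A = 0$, which is degenerate and discarded. For each of the remaining six off-diagonal pairs, a direct comparison of entries pins $A$ down to a one-parameter family, and the six families are exactly the six matrix forms listed in 1)--6): the pairs $(B_1, B_2), (B_2, B_3), (B_3, B_1)$ produce the three diagonal forms, while $(B_1, B_3), (B_2, B_1), (B_3, B_2)$ produce the three anti-diagonal forms.

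The translation conditions are then read off directly from $B_1 r = (f, e)^T$, $B_2 r = (\zeta^2 f, \zeta^{-2} e)^T$, and $B_3 r = (\zeta^{-2} f, \zeta^2 e)^T$: for each surviving form, the two companion matrices $C_1$ and $C_2$ give precisely the two congruences on $(e,f)^T$ stated in the proposition.

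The main obstacle is purely organizational: the enumeration must be carried out systematically so that each of the six non-degenerate pairs $(C_1, C_2)$ is matched both to the correct form of $A$ and to the correct pair of congruences on $r$, and one must verify that these six cases exhaust the non-degenerate possibilities without overlap. No analytic subtlety arises; the argument is a controlled finite case check, made manageable by the facts that $G_6$ is purely linear and that it is generated by just two involutions.
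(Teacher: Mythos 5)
Your proposal is correct and follows essentially the same route as the paper: restrict condition (\ref{E:L/G}) to the two involutive generators of $G_6$, use observation 2 to limit their companions to the three order-two elements $u$, $v$, $uvu$, and discard the three pairs with equal companions, leaving exactly the six listed cases with the stated matrix forms and translation congruences. The only cosmetic difference is that you make explicit, via the invertibility of $B_1-B_2$, why the equal-companion pairs force $A=0$, a point the paper leaves implicit.
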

\begin{proof}
The group $G_6$ is generated by
$$u=\left( \begin{array}{cc} & 1 \\ 1 & \end{array} \right) \ \hbox{ and } \ v=\left( \begin{array}{cc} & \theta \\ \theta^{-1} & \end{array} \right),\ \ \theta=\zeta^2,$$
where both $u$ and $v$ are of order 2. The other elements of $G_6$, other than the identity, are
$$uv=\left( \begin{array}{cc} \theta^{-1} & \\ & \theta \end{array} \right),\ vu=\left( \begin{array}{cc} \theta & \\ & \theta^{-1} \end{array} \right) \ \hbox{ and }\  uvu=\left( \begin{array}{cc} & \theta^{-1} \\ \theta & \end{array} \right),$$
where $uvu$ is of order 2 and both $uv$ and $vu$ are of order 3.

For $L=(A|r)$, we have
$$L\circ u=\left( \begin{array}{cc} b & a \\ d & c \end{array} \right.\left| \begin{array}{c} e \\ f \end{array} \right),\ L\circ v=\left( \begin{array}{cc} \theta^{-1}b & \theta a \\ \theta^{-1}d & \theta c \end{array} \right.\left| \begin{array}{c} e \\ f \end{array} \right),$$
and
$$u\circ L=\left( \begin{array}{cc} c & d \\ a & b \end{array} \right.\left| \begin{array}{c} f \\ e \end{array} \right),\ v\circ L=\left( \begin{array}{cc} \theta c & \theta d \\ \theta^{-1}a & \theta^{-1}b \end{array} \right.\left| \begin{array}{c} \theta f \\ \theta^{-1}e \end{array} \right),$$
$$uvu\circ L=\left( \begin{array}{cc} \theta^{-1}c & \theta^{-1}d \\ \theta a & \theta b \end{array} \right.\left| \begin{array}{c} \theta^{-1}f \\ \theta e \end{array} \right).$$

By observations 1 and 2, we have the following six possibilities:

1) $L\circ u\equiv u\circ L \hbox{ mod } \Lambda$ and $L\circ v\equiv v\circ L \hbox{ mod } \Lambda$;

2) $L\circ u\equiv v\circ L \hbox{ mod } \Lambda$ and $L\circ v\equiv uvu\circ L \hbox{ mod } \Lambda$;

3) $L\circ u\equiv uvu\circ L \hbox{ mod } \Lambda$ and $L\circ v\equiv u\circ L \hbox{ mod } \Lambda$;

4) $L\circ u\equiv u\circ L \hbox{ mod } \Lambda$ and $L\circ v\equiv uvu\circ L \hbox{ mod } \Lambda$;

5) $L\circ u\equiv v\circ L \hbox{ mod } \Lambda$ and $L\circ v\equiv u\circ L \hbox{ mod } \Lambda$;

6) $L\circ u\equiv uvu\circ L \hbox{ mod } \Lambda$ and $L\circ v\equiv v\circ L \hbox{ mod } \Lambda$.
\end{proof}

The case $(\Lambda,G_2)$ is similar to the case $(\Lambda,G_6)$. We have the following

\begin{prop}\label{P:G2}
If $L=(A|r)$ induces a Latt\`es map in the case $(\Lambda,G_2)$, then $A=aB$, where $B$ takes one of the following six forms:
$$\begin{array}{lll}
1)\ \left( \begin{array}{cc} 1 & \\ & 1 \end{array} \right), & 2)\ \left( \begin{array}{cc} 1 & \\ & \zeta^{-2} \end{array} \right), & 3)\ \left( \begin{array}{cc} 1 & \\ & \zeta^2 \end{array} \right),\\
4)\ \left( \begin{array}{cc} & 1 \\ 1 & \end{array} \right), & 5)\ \left( \begin{array}{cc} & 1 \\ \zeta^{-2} & \end{array} \right), & 6)\ \left( \begin{array}{cc} & 1 \\ \zeta^2 & \end{array} \right),
\end{array}$$
and
$$r\equiv e\left( \begin{array}{c} 1 \\ 1 \end{array} \right) \hbox{ mod } L^2(\zeta),\ \ e\equiv \zeta^2e \hbox{ mod } L(\zeta).$$
\end{prop}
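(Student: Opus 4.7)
The plan is to mimic the proof of Proposition \ref{P:G6}. The group $G_2 = G(3,1,2)$ is generated by the same two antidiagonal order-$2$ matrices $u$ and $v$ that generate $G_6$, together with one extra generator $w = \operatorname{diag}(\zeta^2, 1)$ of order $3$. Because the matrix equation $AB = CA$ in condition (\ref{E:L/G}) is independent of the lattice, the analysis of that condition for $u$ and $v$ can be taken verbatim from the proof of Proposition \ref{P:G6}: it yields the same six candidate matrix forms $B$ with the same pairings $(h_u, h_v)$.

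For the extra generator one needs $h_w := AwA^{-1} \in G_2$. A direct computation gives $h_w = w$ when $B$ is diagonal (cases 1--3) and $h_w = \operatorname{diag}(1, \zeta^2)$ when $B$ is antidiagonal (cases 4--6); both lie in $G_2$ and have order $3$, so all six matrix forms survive. The requirement $A\Lambda \subset L^2(\zeta)$ then forces the scalar factor to lie in $L(\zeta)$ in every case, justifying the uniform notation $A = aB$ with $a \in L(\zeta)$.

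It remains to translate the three relations into congruences on $r = (e,f)^T$. In cases $1, 3, 4, 5$ one of the two relations $r \equiv h_u r$ or $r \equiv h_v r \hbox{ mod } L^2(\zeta)$ has $h = u$, so reads directly as $r \equiv (f, e)^T$ and yields $f \equiv e \hbox{ mod } L(\zeta)$. In cases $2$ and $6$ neither pair member is $u$; instead one obtains $f \equiv \zeta^2 e$ and $f \equiv \zeta^{-2} e$, whose combination forces $(\zeta^2 - \zeta^{-2}) e \in L(\zeta)$. Since $\zeta^2 - \zeta^{-2} = (\zeta - 1)(1 - \zeta^2)$ and $\zeta - 1$ is a unit in $L(\zeta)$, this is equivalent to $e \equiv \zeta^2 e \hbox{ mod } L(\zeta)$, after which $f \equiv \zeta^2 e \equiv e$ follows. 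The $w$-relation then contributes $e \equiv \zeta^2 e$ in cases 1--3 and $f \equiv \zeta^2 f$ in cases 4--6, both equivalent to $e \equiv \zeta^2 e \hbox{ mod } L(\zeta)$ once $f \equiv e$ is in hand. Thus all six cases collapse to the uniform conclusion $r \equiv e(1,1)^T \hbox{ mod } L^2(\zeta)$ with $e \equiv \zeta^2 e \hbox{ mod } L(\zeta)$.

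The main obstacle is the case-by-case bookkeeping in the final step: cases $2$ and $6$ do not have a $u$- or $v$-condition that directly produces $f \equiv e$, so one must combine the two congruences and invoke the factorization of $\zeta^2 - \zeta^{-2}$ in $L(\zeta)$. The remaining cases are immediate, and everything else is routine matrix and lattice arithmetic in $U(2)$.
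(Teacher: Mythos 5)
Your proposal is correct and follows essentially the same route as the paper: enumerate the possible images of the order-$2$ generators $u,v$ under conjugation by $A$ (giving the same six pairings as in Proposition \ref{P:G6}), check compatibility with the extra order-$3$ generator $w$, and read off the congruences on $r$; you in fact supply the $r$-computation (including the unit argument $\zeta-1=\zeta^2$ needed in cases $2$ and $6$) that the paper leaves implicit. The one point you gloss over, which the paper states explicitly, is that $u$, $v$ and $uvu$ are the \emph{only} order-$2$ elements of the larger group $G_2$ — lattice-independence of $AB=CA$ alone does not rule out $AuA^{-1}$ landing on some other involution of $G_2$, so this small check is needed before the $G_6$ analysis can be imported verbatim.
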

\begin{proof}
The group $G_2$ is generated by
$$u=\left( \begin{array}{cc} & 1 \\ 1 & \end{array} \right),\ v=\left( \begin{array}{cc} & \theta \\ \theta^{-1} & \end{array} \right) \ \hbox{ and } \ w=\left( \begin{array}{cc} \theta & \\ & 1 \end{array} \right),\ \ \theta=\zeta^2,$$
where both $u$ and $v$ are of order 2 and $w$ is of order 3. One readily checks that $uvu$ is the only other element of order 2 of $G_2$. As in the proof of Proposition \ref{P:G6}, we have the following six possibilities:

1) $L\circ u\equiv u\circ L \hbox{ mod } \Lambda$, $L\circ v\equiv v\circ L \hbox{ mod } \Lambda$ ($\Rightarrow L\circ w\equiv w\circ L \hbox{ mod } \Lambda$);

2) $L\circ u\equiv v\circ L \hbox{ mod } \Lambda$, $L\circ v\equiv uvu\circ L \hbox{ mod } \Lambda$ ($\Rightarrow L\circ w\equiv w\circ L \hbox{ mod } \Lambda$);

3) $L\circ u\equiv uvu\circ L \hbox{ mod } \Lambda$, $L\circ v\equiv u\circ L \hbox{ mod } \Lambda$ ($\Rightarrow L\circ w\equiv w\circ L \hbox{ mod } \Lambda$);

4) $L\circ u\equiv u\circ L \hbox{ mod } \Lambda$, $L\circ v\equiv uvu\circ L \hbox{ mod } \Lambda$ ($\Rightarrow L\circ w\equiv uvw\circ L \hbox{ mod } \Lambda$);

5) $L\circ u\equiv v\circ L \hbox{ mod } \Lambda$, $L\circ v\equiv u\circ L \hbox{ mod } \Lambda$ ($\Rightarrow L\circ w\equiv uvw\circ L \hbox{ mod } \Lambda$);

6) $L\circ u\equiv uvu\circ L \hbox{ mod } \Lambda$, $L\circ v\equiv v\circ L \hbox{ mod } \Lambda$ ($\Rightarrow L\circ w\equiv uvw\circ L \hbox{ mod } \Lambda$).
\end{proof}

The next proposition deals with the case $(\Lambda,G_1)$.

\begin{prop}\label{P:G1}
If $L=(A|r)$ induces a Latt\`es map in the case $(\Lambda,G_1)$, then $A=aB$, where $B$ takes one of the following eight forms:
$$\begin{array}{llll}
1)\ \left( \begin{array}{cc} 1 & 1 \\ 1 & -1 \end{array} \right), & 2)\ \left( \begin{array}{cc} 1 & -1 \\ 1 & 1 \end{array} \right), & 3)\ \left( \begin{array}{cc} 1 & 1 \\ -1 & 1 \end{array} \right), & 4)\ \left( \begin{array}{cc} 1 & -1 \\ -1 & -1 \end{array} \right),\\
5)\ \left( \begin{array}{cc} 1 & \\ & 1 \end{array} \right), & 6)\ \left( \begin{array}{cc} 1 & \\ & -1 \end{array} \right), & 7)\ \left( \begin{array}{cc} & 1 \\ 1 & \end{array} \right), & 8)\ \left( \begin{array}{cc} & 1 \\ -1 & \end{array} \right),
\end{array}$$
and
$$r\equiv e\left( \begin{array}{c} 1 \\ 1 \end{array} \right) \hbox{ mod } L^2(\tau),\ \ e\equiv -e \hbox{ mod } L(\tau).$$
\end{prop}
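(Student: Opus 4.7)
The plan is to adapt the case analysis of Propositions~\ref{P:G6} and~\ref{P:G2} to the three generators
$$u=\left(\begin{array}{cc} & 1 \\ 1 & \end{array}\right),\ v=\left(\begin{array}{cc} & -1 \\ -1 & \end{array}\right) \hbox{ and } w=\left(\begin{array}{cc} -1 & \\ & 1 \end{array}\right)$$
of $G_1=G(2,1,2)$ (corresponding to $\theta=-1$). The seven non-identity elements of $G_1$ split into five of order $2$ --- namely $u$, $v$, $w$, $w':=\left(\begin{array}{cc}1&\\&-1\end{array}\right)$ and $-I=uv$ --- together with two elements of order $4$, the two anti-diagonals with mixed signs.

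First I would enumerate, for each generator $g\in\{u,v,w\}$, the possible conjugates $C_g\in G_1$ allowed by observations 1 and 2. Since $u,v,w$ are all of order $2$, observation 2 forces each $C_g$ to be one of the five order-$2$ elements above. The relations then cut this down sharply: because $uv=-I$, we must have $C_uC_v=-I$, which restricts $(C_u,C_v)$ to $\{(u,v),(v,u),(w,w'),(w',w)\}$ (the choice $C_u=-I$ would force $C_v=I$, of order $1$, and is excluded). Similarly, as $uw$ has order $4$, so must $C_uC_w$; a direct check shows that for each of the four pairs above there are exactly two admissible values of $C_w$. This yields eight consistent assignments, matching the eight forms of $B$ in the statement.

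For each of the eight assignments I would then solve the simultaneous linear equations $AB=C_BA$ for $B\in\{u,v,w\}$. Because $G_1$ acts irreducibly on $\cv^2$, Schur's lemma determines $A$ up to a scalar factor $a$ (constrained only by $A\Lambda\subset\Lambda$), and a short linear-algebra computation recovers one of the eight matrices $1)$--$8)$: the four non-monomial shapes $1)$--$4)$ arise precisely in the cases where the conjugacy classes $\{u,v\}$ and $\{w,w'\}$ of non-central reflections get exchanged, while the four monomial shapes $5)$--$8)$ correspond to the assignments preserving this partition. Finally, the condition $L\circ g\equiv h\circ L\hbox{ mod }\Lambda$ also imposes $(I-C_g)r\in L^2(\tau)$ for each generator. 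Writing $r=\left(\begin{array}{c}e\\f\end{array}\right)$ and evaluating this constraint for the five possible order-$2$ matrices $C$, one checks that in every one of the eight cases the system reduces to $e-f\in L(\tau)$ together with $2e\in L(\tau)$, equivalently $r\equiv e\left(\begin{array}{c}1\\1\end{array}\right)\hbox{ mod }L^2(\tau)$ with $e\equiv-e\hbox{ mod }L(\tau)$.

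The only genuine obstacle is the tedium of the eight-case enumeration; each case is an elementary computation, and the appearance of the four non-monomial forms reflects the outer automorphism of $G_1\cong D_4$ that exchanges its two conjugacy classes of non-central reflections.
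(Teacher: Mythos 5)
Your proof is correct and follows essentially the same route as the paper: enumerate the admissible assignments $g\mapsto C_g$ on the generators of $G_1$, solve $AB=C_BA$ for $A$, and read off the congruence on $r$ from $(I-C_g)r\in L^2(\tau)$. What you add is the justification that the paper compresses into ``a careful examination'': the count of exactly eight assignments via the central relation $C_uC_v=C_{uv}=-I$ together with the order constraint on $C_uC_w$, and Schur's lemma for the uniqueness of $A$ up to the scalar $a$. You are in fact more accurate than the paper on one point: the paper's proof asserts that every non-identity element of $G_1=G(2,1,2)$ has order $2$, which is false --- the two mixed-sign antidiagonal elements $uw$ and $wu$ have order $4$ --- and it is precisely this order-$4$ constraint that you (correctly) use to cut the choices of $C_w$ down to two for each of the four pairs $(C_u,C_v)$. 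The statement of the proposition is unaffected, and your verification of the condition on $r$ in all eight cases checks out.
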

\begin{proof}
The group $G_1$ is generated by
$$u=\left( \begin{array}{cc} & 1 \\ 1 & \end{array} \right),\ v=\left( \begin{array}{cc} & -1 \\ -1 & \end{array} \right) \ \hbox{ and } \ w=\left( \begin{array}{cc} -1 & \\ & 1 \end{array} \right),$$
and each element, other than the identity, of $G_1$ is of order 2.

A careful examination shows that we have the following eight possibilities:

1) $L\circ w\equiv v\circ L \hbox{ mod } \Lambda$, $L\circ u\equiv uvw\circ L \hbox{ mod } \Lambda$ ($\Rightarrow L\circ v\equiv w\circ L \hbox{ mod } \Lambda$);

2) $L\circ w\equiv v\circ L \hbox{ mod } \Lambda$, $L\circ u\equiv w\circ L \hbox{ mod } \Lambda$ ($\Rightarrow L\circ v\equiv uvw\circ L \hbox{ mod } \Lambda$);

3) $L\circ w\equiv u\circ L \hbox{ mod } \Lambda$, $L\circ u\equiv uvw\circ L \hbox{ mod } \Lambda$ ($\Rightarrow L\circ v\equiv w\circ L \hbox{ mod } \Lambda$);

4) $L\circ w\equiv u\circ L \hbox{ mod } \Lambda$, $L\circ u\equiv w\circ L \hbox{ mod } \Lambda$ ($\Rightarrow L\circ v\equiv uvw\circ L \hbox{ mod } \Lambda$);

5) $L\circ w\equiv w\circ L \hbox{ mod } \Lambda$, $L\circ u\equiv u\circ L \hbox{ mod } \Lambda$ ($\Rightarrow L\circ v\equiv v\circ L \hbox{ mod } \Lambda$);

6) $L\circ w\equiv w\circ L \hbox{ mod } \Lambda$, $L\circ u\equiv v\circ L \hbox{ mod } \Lambda$ ($\Rightarrow L\circ v\equiv u\circ L \hbox{ mod } \Lambda$);

7) $L\circ w\equiv uvw\circ L \hbox{ mod } \Lambda$, $L\circ u\equiv u\circ L \hbox{ mod } \Lambda$ ($\Rightarrow L\circ v\equiv v\circ L \hbox{ mod } \Lambda$);

8) $L\circ w\equiv uvw\circ L \hbox{ mod } \Lambda$, $L\circ u\equiv v\circ L \hbox{ mod } \Lambda$ ($\Rightarrow L\circ v\equiv u\circ L \hbox{ mod } \Lambda$).
\end{proof}

Next, we study the case $(\Lambda,G_3)$.

\begin{prop}\label{P:G3}
If $L=(A|r)$ induces a Latt\`es map in the case $(\Lambda,G_3)$, then $A=aB$, where $B$ takes one of the following eight forms:
$$\begin{array}{llll}
1)\ \left( \begin{array}{cc} 1 & \\ & 1 \end{array} \right), & 2)\ \left( \begin{array}{cc} 1 & \\ & -1 \end{array} \right), & 3)\ \left( \begin{array}{cc} & 1 \\ 1 & \end{array} \right), & 4)\ \left( \begin{array}{cc} & 1 \\ -1 & \end{array} \right),\\
5)\ \left( \begin{array}{cc} 1 & \\ & -i \end{array} \right), & 6)\ \left( \begin{array}{cc} 1 & \\ & i \end{array} \right), & 7)\ \left( \begin{array}{cc} & 1 \\ -i & \end{array} \right), & 8)\ \left( \begin{array}{cc} & 1 \\ i & \end{array} \right),
\end{array}$$
and
$$r\equiv e\left( \begin{array}{c} 1 \\ 1 \end{array} \right) \hbox{ mod } L^2(i),\ \ e\equiv ie \hbox{ mod } L(i).$$
\end{prop}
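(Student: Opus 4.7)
The plan is to follow the same strategy as in the preceding three propositions. The group $G_3=G(4,1,2)$ is generated by
$$u=\left(\begin{array}{cc} & 1 \\ 1 & \end{array}\right),\quad v=\left(\begin{array}{cc} & \theta \\ \theta^{-1} & \end{array}\right),\quad w=\left(\begin{array}{cc} \theta & \\ & 1 \end{array}\right),\quad \theta=i,$$
with $u$ and $v$ of order $2$ and $w$ of order $4$. By observation~1 the semiconjugacy (\ref{E:L/G}) need only be checked on these three generators, and by observation~2 the matching elements $h_u,h_v,h_w\in G_3$ must be of orders $2,2,4$ respectively.

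The first step is to pin down the shape of $A$ using the $w$-equation. Since conjugation preserves the eigenvalue pair $\{i,1\}$ of $w$, and the only elements of $G_3$ with these eigenvalues are the two diagonal matrices $w=\operatorname{diag}(i,1)$ and $uwu=\operatorname{diag}(1,i)$, the identity $Aw=h_wA$ forces either $A$ to commute with $w$ (hence $A$ diagonal) or $uA$ to commute with $w$ (hence $A$ anti-diagonal). In either case $A=aB$ with $B=\operatorname{diag}(1,\epsilon)$ or $B=\left(\begin{array}{cc} & 1 \\ \epsilon & \end{array}\right)$; imposing now $Au=h_uA$ with $h_u\in G_3$ restricts $\epsilon$ to be a fourth root of unity, and the $v$-equation is then automatic. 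This produces exactly the eight forms of $B$ in the statement.

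For each of the eight forms I would then compute $(h_u,h_v,h_w)$ explicitly and convert the translation part of $L\circ g\equiv h_g\circ L\bmod\Lambda$ (noting that the generators of $G_3$ have zero affine part) into three conditions $(h_g-I)r\in L^2(i)$. In every case the $w$-equation reduces to $e(1-i)\in L(i)$, i.e.\ $e\equiv ie\bmod L(i)$, and the $u$-equation gives a congruence of the shape $f\equiv\pm e\bmod L(i)$. The crucial simplification is that $e(1-i)\in L(i)$ forces $2e\in L(i)$, so the two signs produce the same residue class and the $v$-equation becomes automatic. Consequently each of the eight cases collapses to the single answer $r\equiv e(1,1)^{T}\bmod L^2(i)$ with $e\equiv ie\bmod L(i)$.

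The main obstacle is the case bookkeeping, which is heavier than for $G_1$, $G_2$, $G_6$ because $G_3$ is larger and its generators mix diagonal and anti-diagonal behaviour. However, once the eigenvalue observation has pinned down the monomial shape of $A$, each of the eight cases reduces to a routine computation in $L(i)$, so no new ingredient beyond those used in the previous propositions is required.
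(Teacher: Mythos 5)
Your proposal is correct and follows essentially the same route as the paper: reduce condition (\ref{E:L/G}) to the generators, use the order-$4$ diagonal generator $t=\mathrm{diag}(i,1)$ together with the monomial structure of $G(4,1,2)$ to force $b=c=0$ or $a=d=0$, and then read off the translation conditions, with the congruence $e\equiv ie \bmod L(i)$ absorbing the various fourth-root-of-unity ambiguities in $f\equiv \epsilon e$. Your eigenvalue phrasing of the step pinning down the shape of $A$ is just a clean repackaging of the paper's observation that $At=\tilde tA$ with $\tilde t\in G_3$ monomial forces $A$ monomial, and the only slip is cosmetic: for $\epsilon=\pm i$ the $u$-equation gives $f\equiv\pm ie$ rather than $f\equiv\pm e$, which is harmless since $ie\equiv e \bmod L(i)$.
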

\begin{proof}
The group $G_3$ is generated by
$$u=\left( \begin{array}{cc} & 1 \\ 1 & \end{array} \right),\ s=\left( \begin{array}{cc} & i \\ -i & \end{array} \right) \ \hbox{ and } \ t=\left( \begin{array}{cc} i & \\ & 1 \end{array} \right),$$
where both $u$ and $s$ are of order 2 and $t$ is of order 4. One readily checks that $G_1$ is a subgroup of $G_3$ and the only elements of order 2 of $G_3$, other than those of $G_1$, are $s$ and $usu$.

Every element of $G_3$ is of the form
$$\left( \begin{array}{cc} \alpha & \\ & \beta \end{array} \right)\ \hbox{ or }\ \left( \begin{array}{cc} & \alpha \\ \beta & \end{array} \right),$$
with $\alpha\neq 0$ and $\beta\neq 0$. It is then easy to check that either $b=c=0$ or $a=d=0$, if there exists $\tilde{t}\in G_3$ such that $At=\tilde{t}A$.

A careful examination then shows that we have the following eight possibilities:

1) $L\circ u\equiv u\circ L \hbox{ mod } \Lambda$, $L\circ s\equiv s\circ L \hbox{ mod } \Lambda$ ($\Rightarrow L\circ t\equiv t\circ L \hbox{ mod } \Lambda$);

2) $L\circ u\equiv v\circ L \hbox{ mod } \Lambda$, $L\circ s\equiv usu\circ L \hbox{ mod } \Lambda$ ($\Rightarrow L\circ t\equiv t\circ L \hbox{ mod } \Lambda$);

3) $L\circ u\equiv u\circ L \hbox{ mod } \Lambda$, $L\circ s\equiv usu\circ L \hbox{ mod } \Lambda$ ($\Rightarrow L\circ t\equiv ust\circ L \hbox{ mod } \Lambda$);

4) $L\circ u\equiv v\circ L \hbox{ mod } \Lambda$, $L\circ s\equiv s\circ L \hbox{ mod } \Lambda$ ($\Rightarrow L\circ t\equiv ust\circ L \hbox{ mod } \Lambda$);

5) $L\circ u\equiv s\circ L \hbox{ mod } \Lambda$, $L\circ s\equiv v\circ L \hbox{ mod } \Lambda$ ($\Rightarrow L\circ t\equiv t\circ L \hbox{ mod } \Lambda$);

6) $L\circ u\equiv usu\circ L \hbox{ mod } \Lambda$, $L\circ s\equiv u\circ L \hbox{ mod } \Lambda$ ($\Rightarrow L\circ t\equiv t\circ L \hbox{ mod } \Lambda$);

7) $L\circ u\equiv s\circ L \hbox{ mod } \Lambda$, $L\circ s\equiv u\circ L \hbox{ mod } \Lambda$ ($\Rightarrow L\circ t\equiv ust\circ L \hbox{ mod } \Lambda$);

8) $L\circ u\equiv usu\circ L \hbox{ mod } \Lambda$, $L\circ s\equiv v\circ L \hbox{ mod } \Lambda$ ($\Rightarrow L\circ t\equiv ust\circ L \hbox{ mod } \Lambda$).
\end{proof}

The case $(\Lambda,G_4)$ is similar to the case $(\Lambda,G_3)$. We have the following

\begin{prop}\label{P:G4}
If $L=(A|r)$ induces a Latt\`es map in the case $(\Lambda,G_3)$, then $A=aB$, where $B$ takes one of the following twelve forms:
$$\begin{array}{llll}
1)\ \left( \begin{array}{cc} 1 & \\ & 1 \end{array} \right), & 2)\ \left( \begin{array}{cc} 1 & \\ & -1 \end{array} \right), & 3)\ \left( \begin{array}{cc} & 1 \\ 1 & \end{array} \right), & 4)\ \left( \begin{array}{cc} & 1 \\ -1 & \end{array} \right),\\
5)\ \left( \begin{array}{cc} 1 & \\ & \zeta^{-1} \end{array} \right), & 6)\ \left( \begin{array}{cc} 1 & \\ & \zeta \end{array} \right), & 7)\ \left( \begin{array}{cc} & 1 \\ \zeta^{-1} & \end{array} \right), & 8)\ \left( \begin{array}{cc} & 1 \\ \zeta & \end{array} \right),\\
9)\ \left( \begin{array}{cc} 1 & \\ & \zeta^{-2} \end{array} \right), & 10)\ \left( \begin{array}{cc} 1 & \\ & \zeta^2 \end{array} \right), & 11)\ \left( \begin{array}{cc} & 1 \\ \zeta^{-2} & \end{array} \right), & 12)\ \left( \begin{array}{cc} & 1 \\ \zeta^2 & \end{array} \right),
\end{array}$$
and
$$r\equiv e\left( \begin{array}{c} 1 \\ 1 \end{array} \right) \hbox{ mod } L^2(\zeta),\ \ e\equiv \zeta e \hbox{ mod } L(\zeta).$$
\end{prop}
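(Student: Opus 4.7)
The argument mirrors that of Proposition \ref{P:G3}, substituting sixth roots of unity for fourth roots throughout. I would begin by writing the generators
$$u = \left(\begin{array}{cc} & 1 \\ 1 & \end{array}\right),\ s = \left(\begin{array}{cc} & \zeta \\ \zeta^{-1} & \end{array}\right),\ t = \left(\begin{array}{cc} \zeta & \\ & 1 \end{array}\right),$$
noting that $u, s$ have order $2$ and $t$ has order $6$, and that every element of $G_4$ is monomial with nonzero entries among the sixth roots of unity.

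The decisive step is to determine the shape of $A$. From the relation $At \equiv \tilde t A \hbox{ mod } \Lambda$ and observation 2, $\tilde t \in G_4$ must have order $6$. Invertibility of $A$ (since $|\det A| = d_f > 0$) makes $\tilde t = A t A^{-1}$ conjugate to $t$, hence $\operatorname{tr}(\tilde t) = 1 + \zeta$ and $\det(\tilde t) = \zeta$. Every anti-diagonal element of $G_4$ has trace zero, so $\tilde t$ must be diagonal with eigenvalues $\{\zeta, 1\}$, leaving only $\tilde t = t$ or $\tilde t = ust = \operatorname{diag}(1, \zeta)$. Matching entries of $At = tA$ and $At = ust \cdot A$ then yields $b = c = 0$ or $a = d = 0$ respectively, so $A$ is either diagonal or anti-diagonal.

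With this dichotomy I would exhaust the $12$ normalized forms by applying the commutation $Au \equiv \tilde u A \hbox{ mod } \Lambda$ with $\tilde u \in G_4$ of order $2$. If $A = \operatorname{diag}(a, d)$, then $Au$ is anti-diagonal and the only candidate $\tilde u$ is anti-diagonal of the form $\left(\begin{array}{cc} & a/d \\ d/a & \end{array}\right)$; membership in $G_4$ forces $d/a$ to be one of $1, -1, \zeta, \zeta^{-1}, \zeta^2, \zeta^{-2}$, yielding forms $1, 2, 5, 6, 9, 10$. The anti-diagonal case $A = \left(\begin{array}{cc} & b \\ c & \end{array}\right)$ is treated symmetrically and produces forms $3, 4, 7, 8, 11, 12$ via the condition that $c/b$ be a sixth root of unity. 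The commutation with $s$ then automatically matches some $\tilde s \in G_4$ and imposes no additional constraint on the linear part.

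Finally, the translation $r = (e, f)^T$ is pinned down by $r \equiv \tilde g r \hbox{ mod } \Lambda$ for $g = u$ and $g = t$: the first gives $e \equiv f \hbox{ mod } L(\zeta)$, whence $r \equiv e(1,1)^T \hbox{ mod } L^2(\zeta)$, and the second gives $e(1 - \zeta) \in L(\zeta)$, i.e.\ $e \equiv \zeta e \hbox{ mod } L(\zeta)$; the $s$-condition then follows. I expect the main obstacle to be purely the bookkeeping: checking, for each of the twelve normalized forms, that consistent choices of $\tilde u, \tilde s, \tilde t \in G_4$ exist and that the translation constraints collapse to the two above. No new phenomenon appears beyond what is already present in Proposition \ref{P:G3}.
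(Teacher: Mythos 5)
Your proof is correct and follows essentially the same route as the paper's: reduce condition (\ref{E:L/G}) to generators, use observation 2 together with the relation $At=\tilde{t}A$ to force $A$ diagonal or anti-diagonal, and then enumerate the twelve normalized forms; your trace-and-determinant argument pinning $\tilde{t}$ down to $t$ or $ust$ is in fact a cleaner justification of the step the paper disposes of with ``as in the proof of Proposition \ref{P:G3}.'' One small imprecision: the $u$-condition by itself gives $e\equiv\alpha f \bmod L(\zeta)$ for whichever sixth root of unity $\alpha$ occurs in $\tilde{u}$, not $e\equiv f$ directly; you must first extract $e\equiv\zeta e \bmod L(\zeta)$ from the $t$-condition and then use $e(1-\zeta^k)\in L(\zeta)$ to conclude $f\equiv e$, which is exactly the case-by-case ``collapse'' you defer to at the end, so this is a matter of ordering the deductions rather than a gap.
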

\begin{proof}
The group $G_4$ is generated by
$$u=\left( \begin{array}{cc} & 1 \\ 1 & \end{array} \right),\ s=\left( \begin{array}{cc} & \zeta \\ \zeta^{-1} & \end{array} \right) \ \hbox{ and } \ t=\left( \begin{array}{cc} \zeta & \\ & 1 \end{array} \right),$$
where both $u$ and $s$ are of order 2 and $t$ is of order 6. One readily checks that $G_1$ is a subgroup of $G_4$ and the only elements of order 2 of $G_4$, other than those of $G_1$, are $s$, $usu$, $sus$ and $ususu$.

As in the proof of Proposition \ref{P:G3}, we have either $b=c=0$ or $a=d=0$.

A careful examination shows that we have the following twelve possibilities:

1) $L\circ u\equiv u\circ L \hbox{ mod } \Lambda$, $L\circ s\equiv s\circ L \hbox{ mod } \Lambda$ ($\Rightarrow L\circ t\equiv t\circ L \hbox{ mod } \Lambda$);

2) $L\circ u\equiv v\circ L \hbox{ mod } \Lambda$, $L\circ s\equiv ususu\circ L \hbox{ mod } \Lambda$ ($\Rightarrow L\circ t\equiv t\circ L \hbox{ mod } \Lambda$);

3) $L\circ u\equiv u\circ L \hbox{ mod } \Lambda$, $L\circ s\equiv usu\circ L \hbox{ mod } \Lambda$ ($\Rightarrow L\circ t\equiv ust\circ L \hbox{ mod } \Lambda$);

4) $L\circ u\equiv v\circ L \hbox{ mod } \Lambda$, $L\circ s\equiv sus\circ L \hbox{ mod } \Lambda$ ($\Rightarrow L\circ t\equiv ust\circ L \hbox{ mod } \Lambda$);

5) $L\circ u\equiv s\circ L \hbox{ mod } \Lambda$, $L\circ s\equiv sus\circ L \hbox{ mod } \Lambda$ ($\Rightarrow L\circ t\equiv t\circ L \hbox{ mod } \Lambda$);

6) $L\circ u\equiv usu\circ L \hbox{ mod } \Lambda$, $L\circ s\equiv u\circ L \hbox{ mod } \Lambda$ ($\Rightarrow L\circ t\equiv t\circ L \hbox{ mod } \Lambda$);

7) $L\circ u\equiv s\circ L \hbox{ mod } \Lambda$, $L\circ s\equiv u\circ L \hbox{ mod } \Lambda$ ($\Rightarrow L\circ t\equiv ust\circ L \hbox{ mod } \Lambda$);

8) $L\circ u\equiv usu\circ L \hbox{ mod } \Lambda$, $L\circ s\equiv ususu\circ L \hbox{ mod } \Lambda$ ($\Rightarrow L\circ t\equiv ust\circ L \hbox{ mod } \Lambda$);

9) $L\circ u\equiv sus\circ L \hbox{ mod } \Lambda$, $L\circ s\equiv v\circ L \hbox{ mod } \Lambda$ ($\Rightarrow L\circ t\equiv t\circ L \hbox{ mod } \Lambda$);

10) $L\circ u\equiv ususu\circ L \hbox{ mod } \Lambda$, $L\circ s\equiv usu\circ L \hbox{ mod } \Lambda$ ($\Rightarrow L\circ t\equiv t\circ L \hbox{ mod } \Lambda$);

11) $L\circ u\equiv sus\circ L \hbox{ mod } \Lambda$, $L\circ s\equiv s\circ L \hbox{ mod } \Lambda$ ($\Rightarrow L\circ t\equiv ust\circ L \hbox{ mod } \Lambda$);

12) $L\circ u\equiv ususu\circ L \hbox{ mod } \Lambda$, $L\circ s\equiv v\circ L \hbox{ mod } \Lambda$ ($\Rightarrow L\circ t\equiv ust\circ L \hbox{ mod } \Lambda$).
\end{proof}

Finally, the next proposition deals with the hardest case $(\Lambda,G_5)$.

\begin{prop}\label{P:G5}
If $L=(A|r)$ induces a Latt\`es map in the case $(\Lambda,G_5)$, then $A=aB$, where $B$ takes one of the following twenty-four forms:
$$\begin{array}{llll}
1)\ \left( \begin{array}{cc} 1 & 1 \\ 1 & -1 \end{array} \right), & 2)\ \left( \begin{array}{cc} 1 & -1 \\ 1 & 1 \end{array} \right), & 3)\ \left( \begin{array}{cc} 1 & 1 \\ -1 & 1 \end{array} \right), & 4)\ \left( \begin{array}{cc} 1 & -1 \\ -1 & -1 \end{array} \right),\\
5)\ \left( \begin{array}{cc} 1 & -i \\ i & -1 \end{array} \right), & 6)\ \left( \begin{array}{cc} 1 & -i \\ -i & 1 \end{array} \right), & 7)\ \left( \begin{array}{cc} 1 & i \\ i & 1 \end{array} \right), & 8)\ \left( \begin{array}{cc} 1 & i \\ -i & -1 \end{array} \right),\\
9)\ \left( \begin{array}{cc} 1 & i \\ 1 & -i \end{array} \right), & 10)\ \left( \begin{array}{cc} 1 & -i \\ 1 & i \end{array} \right), & 11)\ \left( \begin{array}{cc} 1 & i \\ -1 & i \end{array} \right), & 12)\ \left( \begin{array}{cc} 1 & -i \\ -1 & -i \end{array} \right),\\
13)\ \left( \begin{array}{cc} 1 & 1 \\ i & -i \end{array} \right), & 14)\ \left( \begin{array}{cc} 1 & -1 \\ i & i \end{array} \right), & 15)\ \left( \begin{array}{cc} 1 & 1 \\ -i & i \end{array} \right), & 16)\ \left( \begin{array}{cc} 1 & -1 \\ -i & -i \end{array} \right),\\
17)\ \left( \begin{array}{cc} 1 & \\ & 1 \end{array} \right), & 18)\ \left( \begin{array}{cc} 1 & \\ & -1 \end{array} \right), & 19)\ \left( \begin{array}{cc} & 1 \\ 1 & \end{array} \right), & 20)\ \left( \begin{array}{cc} & 1 \\ -1 & \end{array} \right),\\
21)\ \left( \begin{array}{cc} 1 & \\ & -i \end{array} \right), & 22)\ \left( \begin{array}{cc} 1 & \\ & i \end{array} \right), & 23)\ \left( \begin{array}{cc} & 1 \\ -i & \end{array} \right), & 24)\ \left( \begin{array}{cc} & 1 \\ i & \end{array} \right),
\end{array}$$
and
$$r\equiv e\left( \begin{array}{c} 1 \\ 1 \end{array} \right) \hbox{ mod } L^2(i),\ \ e\equiv ie \hbox{ mod } L(i).$$
\end{prop}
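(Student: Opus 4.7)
The plan is to adapt the enumeration strategy used in Propositions \ref{P:G1}--\ref{P:G4}, with additional care for the half-period translation built into $G_5$. I take as generators of $G_5$ the three involutions
$$u=\left(\begin{array}{cc} & 1 \\ 1 & \end{array}\right),\ s=\left(\begin{array}{cc} & i \\ -i & \end{array}\right),\ w=\left(\begin{array}{cc} -1 & \\ & 1 \end{array}\right)$$
of $G(4,2,2)$, together with the translation $\tau=(1|\delta)$ where $\delta=\frac{1+i}{2}\left(\begin{smallmatrix}1\\1\end{smallmatrix}\right)$, which has order $2$ modulo $\Lambda=L^2(i)$. The proof then amounts to enumerating admissible relations $L\circ g\equiv h\circ L\ \hbox{mod}\ \Lambda$ for $g\in\{u,s,w,\tau\}$ and extracting the resulting constraints on $A$ and $r$.

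The decisive new feature compared to the $G_3$ case is that $G(4,2,2)$ is a proper index-$2$ subgroup of $G(4,1,2)$: it contains neither $\hbox{diag}(i,1)$ nor $\hbox{diag}(1,i)$, so its normalizer in $U(2)$ is strictly larger. Concretely, the Hadamard-type matrix $H=\frac{1}{\sqrt{2}}\left(\begin{smallmatrix}1&1\\1&-1\end{smallmatrix}\right)$ satisfies $H^2=I$ and normalizes $G(4,2,2)$ (for instance $HuH=-w$, $HwH=-u$, $HsH=-s$). Multiplying $H$ on either side by elements of $G(4,1,2)$ yields further non-monomial normalizers, producing altogether the sixteen forms $1)$--$16)$; the eight monomial forms $17)$--$24)$ are exactly those already appearing in Proposition \ref{P:G3}.

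With the candidate $B$'s identified, I enumerate the possible images $h_u,h_s,h_w\in G_5$ of $u,s,w$. By observation $2$, each image must be an involution of $G_5$, so its linear part lies in $\{-I,\hbox{diag}(\pm 1,\mp 1),\pm u,\pm s\}$, optionally composed with $\tau$. For each consistent triple, the system $AB_0=(\hbox{linear}\ h_{B_0})A$, $B_0\in\{u,s,w\}$, pins $A$ down to one of the listed forms, up to the scalar $a$. The extra generator $\tau$ imposes $L\circ\tau\equiv\tau^\varepsilon\circ L\ \hbox{mod}\ \Lambda$ for some $\varepsilon\in\{0,1\}$, equivalently $(A-\varepsilon I)\delta\in L^2(i)$; this is satisfied by every listed $B$ upon the appropriate choice of $\varepsilon$. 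Matching the translation parts of the relations then yields $r\equiv e\left(\begin{smallmatrix}1\\1\end{smallmatrix}\right)\ \hbox{mod}\ L^2(i)$ and $e\equiv ie\ \hbox{mod}\ L(i)$, exactly as in Proposition \ref{P:G3}.

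The main obstacle is the combinatorial bookkeeping. With three generators having several candidate images each, the raw case-count is large, and many triples are incompatible with the existence of a single invertible $A$ realizing all three conjugations simultaneously. The non-monomial cases are the subtlest: one must verify explicitly that each Hadamard-type $B$ conjugates $G(4,2,2)$ into itself and delivers the prescribed triple of involutions, which amounts to computing $BuB^{-1}$, $BsB^{-1}$, $BwB^{-1}$ for all sixteen such $B$. Once this normalizer analysis is in hand, every remaining case reduces to a short monomial-matrix calculation, entirely parallel to the arguments in Propositions \ref{P:G3} and \ref{P:G4}.
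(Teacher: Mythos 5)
Your proposal is correct, and its logical core coincides with the paper's: enumerate, for each generator $g$ of $G_5$, the admissible targets $h$ with $L\circ g\equiv h\circ L\ \hbox{mod}\ \Lambda$ (observation 2 restricting the matrix part of $h$ to the seven involutions of $G(4,2,2)$), and solve the resulting systems $AB_0=CA$ to pin down $A$ up to scalar; the conditions on $r$ then come from matching translation parts. Two differences are worth noting. First, the paper's fourth generator is not your pure translation $\tau=(1|\delta)$ but $t=(-u|\delta)$ with $\delta=\frac{1+i}{2}\left(\begin{smallmatrix}1\\1\end{smallmatrix}\right)$, and the relation for $t$ is verified a posteriori in each of the $24$ types using $A\Lambda\subset\Lambda$ (relation (\ref{E:A1})). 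Your substitution is legitimate, since $uvu\cdot t\equiv\tau^{-1}\ \hbox{mod}\ L^2(i)$, so $\{u,s,w,\tau\}$ and $\{u,s,w,t\}$ generate the same group on $E^2$ and observation 1 applies; decoupling the matrix and translation constraints this way is arguably cleaner. However, your claim that $(A-\varepsilon I)\delta\in L^2(i)$ holds ``for every listed $B$'' is accurate only after invoking (\ref{E:A1}): for $B_{17}=I$ one needs $a\in L(i)$ and then $\varepsilon\equiv a\ \hbox{mod}\ (1-i)$, so the choice of $\varepsilon$ depends on $a$, not just on $B$ --- make that dependence explicit, as the paper does. Second, your normalizer analysis is a genuine addition: the paper's ``a careful examination shows'' hides why exactly $24$ forms occur, whereas noting that $G(4,2,2)$ modulo scalars is a Klein four-group whose normalizer in $PU(2)\cong SO(3)$ is the octahedral group of order $24$, generated by the image of $G(4,1,2)$ together with the Hadamard matrix $H$, predicts the count and organizes forms $1)$--$16)$ as the non-monomial cosets. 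If you lean on this, you must also justify that the normalizer is no larger (Schur's lemma to reduce from $GL(2,\cv)$ to $\cv^{*}\cdot U(2)$ via irreducibility of $G(4,2,2)$, then the $SO(3)$ classification); otherwise the completeness of the list still rests on the same exhaustive case-check the paper performs.
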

\begin{proof}
The group $G_5$ is generated by
$$u=\left( \begin{array}{cc} & 1 \\ 1 & \end{array} \right),\ s=\left( \begin{array}{cc} & i \\ -i & \end{array} \right),\ w=\left( \begin{array}{cc} -1 & \\ & 1 \end{array} \right),$$
$$\hbox{and }\ \ \ \  t=\left( \begin{array}{cc} & -1 \\ -1 & \end{array} \right.\left| \frac{1+i}{2} \left( \begin{array}{c} 1 \\ 1 \end{array} \right) \right),$$
where every generator is of order 2. One readily checks that $G_1$ is a subgroup of $G_5$ and the only elements of order 2 of $G_5$, other than those of $G_1$, are $s$, $usu$, $t$ and $ut$. We will also need
$$uvt=\left( \begin{array}{cc} & 1 \\ 1 & \end{array} \right.\left| \frac{1+i}{2} \left( \begin{array}{c} -1 \\ -1 \end{array} \right) \right),\ uts=\left( \begin{array}{cc} & -i \\ i & \end{array} \right.\left| \frac{1+i}{2} \left( \begin{array}{c} 1 \\ 1 \end{array} \right) \right),$$
$$\hbox{and }\ \ \ \  ust=\left( \begin{array}{cc} & i \\ -i & \end{array} \right.\left|\frac{1+i}{2} \left( \begin{array}{c} -i \\ i \end{array} \right) \right).$$
Note that though $uvt$, $uts$ and $ust$ are not of order 2, the matrix parts of them are.

A careful examination shows that we have the following twenty-four possibilities:

1) $L\circ w\equiv v\circ L \hbox{ mod } \Lambda$, $L\circ u\equiv uvw\circ L \hbox{ mod } \Lambda$

($\Rightarrow L\circ s\equiv usu\circ L \hbox{ mod } \Lambda$, $L\circ t\equiv w\circ L \hbox{ mod } \Lambda$);

2) $L\circ w\equiv v\circ L \hbox{ mod } \Lambda$, $L\circ u\equiv w\circ L \hbox{ mod } \Lambda$

($\Rightarrow L\circ s\equiv s\circ L \hbox{ mod } \Lambda$, $L\circ t\equiv uvw\circ L \hbox{ mod } \Lambda$);

3) $L\circ w\equiv u\circ L \hbox{ mod } \Lambda$, $L\circ u\equiv uvw\circ L \hbox{ mod } \Lambda$

($\Rightarrow L\circ s\equiv s\circ L \hbox{ mod } \Lambda$, $L\circ t\equiv w\circ L \hbox{ mod } \Lambda$);

4) $L\circ w\equiv u\circ L \hbox{ mod } \Lambda$, $L\circ u\equiv w\circ L \hbox{ mod } \Lambda$

($\Rightarrow L\circ s\equiv usu\circ L \hbox{ mod } \Lambda$, $L\circ t\equiv uvw\circ L \hbox{ mod } \Lambda$);

5) $L\circ w\equiv s\circ L \hbox{ mod } \Lambda$, $L\circ u\equiv v\circ L \hbox{ mod } \Lambda$

($\Rightarrow L\circ s\equiv w\circ L \hbox{ mod } \Lambda$, $L\circ t\equiv u\circ L \hbox{ mod } \Lambda$);

6) $L\circ w\equiv usu\circ L \hbox{ mod } \Lambda$, $L\circ u\equiv u\circ L \hbox{ mod } \Lambda$

($\Rightarrow L\circ s\equiv w\circ L \hbox{ mod } \Lambda$, $L\circ t\equiv v\circ L \hbox{ mod } \Lambda$);

7) $L\circ w\equiv s\circ L \hbox{ mod } \Lambda$, $L\circ u\equiv u\circ L \hbox{ mod } \Lambda$

($\Rightarrow L\circ s\equiv uvw\circ L \hbox{ mod } \Lambda$, $L\circ t\equiv v\circ L \hbox{ mod } \Lambda$);

8) $L\circ w\equiv usu\circ L \hbox{ mod } \Lambda$, $L\circ u\equiv v\circ L \hbox{ mod } \Lambda$

($\Rightarrow L\circ s\equiv uvw\circ L \hbox{ mod } \Lambda$, $L\circ t\equiv u\circ L \hbox{ mod } \Lambda$);

9) $L\circ w\equiv v\circ L \hbox{ mod } \Lambda$, $L\circ u\equiv s\circ L \hbox{ mod } \Lambda$

($\Rightarrow L\circ s\equiv uvw\circ L \hbox{ mod } \Lambda$, $L\circ t\equiv usu\circ L \hbox{ mod } \Lambda$);

10) $L\circ w\equiv v\circ L \hbox{ mod } \Lambda$, $L\circ u\equiv usu\circ L \hbox{ mod } \Lambda$

($\Rightarrow L\circ s\equiv w\circ L \hbox{ mod } \Lambda$, $L\circ t\equiv s\circ L \hbox{ mod } \Lambda$);

11) $L\circ w\equiv u\circ L \hbox{ mod } \Lambda$, $L\circ u\equiv usu\circ L \hbox{ mod } \Lambda$

($\Rightarrow L\circ s\equiv uvw\circ L \hbox{ mod } \Lambda$, $L\circ t\equiv s\circ L \hbox{ mod } \Lambda$);

12) $L\circ w\equiv u\circ L \hbox{ mod } \Lambda$, $L\circ u\equiv s\circ L \hbox{ mod } \Lambda$

($\Rightarrow L\circ s\equiv w\circ L \hbox{ mod } \Lambda$, $L\circ t\equiv usu\circ L \hbox{ mod } \Lambda$);

13) $L\circ w\equiv s\circ L \hbox{ mod } \Lambda$, $L\circ u\equiv uvw\circ L \hbox{ mod } \Lambda$

($\Rightarrow L\circ s\equiv v\circ L \hbox{ mod } \Lambda$, $L\circ t\equiv w\circ L \hbox{ mod } \Lambda$);

14) $L\circ w\equiv s\circ L \hbox{ mod } \Lambda$, $L\circ u\equiv w\circ L \hbox{ mod } \Lambda$

($\Rightarrow L\circ s\equiv u\circ L \hbox{ mod } \Lambda$, $L\circ t\equiv uvw\circ L \hbox{ mod } \Lambda$);

15) $L\circ w\equiv usu\circ L \hbox{ mod } \Lambda$, $L\circ u\equiv uvw\circ L \hbox{ mod } \Lambda$

($\Rightarrow L\circ s\equiv u\circ L \hbox{ mod } \Lambda$, $L\circ t\equiv w\circ L \hbox{ mod } \Lambda$);

16) $L\circ w\equiv usu\circ L \hbox{ mod } \Lambda$, $L\circ u\equiv w\circ L \hbox{ mod } \Lambda$

($\Rightarrow L\circ s\equiv v\circ L \hbox{ mod } \Lambda$, $L\circ t\equiv uvw\circ L \hbox{ mod } \Lambda$);

17) $L\circ w\equiv w\circ L \hbox{ mod } \Lambda$, $L\circ u\equiv u\circ L \hbox{ mod } \Lambda$

($\Rightarrow L\circ s\equiv s\circ L \hbox{ mod } \Lambda$, $L\circ t\equiv t\circ L \hbox{ mod } \Lambda$);

18) $L\circ w\equiv w\circ L \hbox{ mod } \Lambda$, $L\circ u\equiv v\circ L \hbox{ mod } \Lambda$

($\Rightarrow L\circ s\equiv usu\circ L \hbox{ mod } \Lambda$, $L\circ t\equiv uvt\circ L \hbox{ mod } \Lambda$);

19) $L\circ w\equiv uvw\circ L \hbox{ mod } \Lambda$, $L\circ u\equiv u\circ L \hbox{ mod } \Lambda$

($\Rightarrow L\circ s\equiv usu\circ L \hbox{ mod } \Lambda$, $L\circ t\equiv t\circ L \hbox{ mod } \Lambda$);

20) $L\circ w\equiv uvw\circ L \hbox{ mod } \Lambda$, $L\circ u\equiv v\circ L \hbox{ mod } \Lambda$

($\Rightarrow L\circ s\equiv s\circ L \hbox{ mod } \Lambda$, $L\circ t\equiv uvt\circ L \hbox{ mod } \Lambda$);

21) $L\circ w\equiv w\circ L \hbox{ mod } \Lambda$, $L\circ u\equiv s\circ L \hbox{ mod } \Lambda$

($\Rightarrow L\circ s\equiv v\circ L \hbox{ mod } \Lambda$, $L\circ t\equiv uts\circ L \hbox{ mod } \Lambda$);

22) $L\circ w\equiv w\circ L \hbox{ mod } \Lambda$, $L\circ u\equiv usu\circ L \hbox{ mod } \Lambda$

($\Rightarrow L\circ s\equiv u\circ L \hbox{ mod } \Lambda$, $L\circ t\equiv ust\circ L \hbox{ mod } \Lambda$);

23) $L\circ w\equiv uvw\circ L \hbox{ mod } \Lambda$, $L\circ u\equiv s\circ L \hbox{ mod } \Lambda$

($\Rightarrow L\circ s\equiv u\circ L \hbox{ mod } \Lambda$, $L\circ t\equiv uts\circ L \hbox{ mod } \Lambda$);

24) $L\circ w\equiv uvw\circ L \hbox{ mod } \Lambda$, $L\circ u\equiv usu\circ L \hbox{ mod } \Lambda$

($\Rightarrow L\circ s\equiv v\circ L \hbox{ mod } \Lambda$, $L\circ t\equiv ust\circ L \hbox{ mod } \Lambda$).

Note that here we have used the relation (\ref{E:A1}), with $\Lambda=L^2(i)$. For instance, in type 1), from $L\circ w\equiv v\circ L \hbox{ mod } L^2(i)$, $L\circ u\equiv uvw\circ L \hbox{ mod } L^2(i)$ and $L\circ s\equiv usu\circ L \hbox{ mod } L^2(i)$, we have
$$A=a\left( \begin{array}{cc} 1 & 1 \\ 1 & -1 \end{array} \right),\ \hbox{ and } \ r\equiv e\left( \begin{array}{c} 1 \\ 1 \end{array} \right) \hbox{ mod } L^2(i),\ \ e\equiv ie \hbox{ mod } L(i).$$
By (\ref{E:A1}), we induce that $a(1+i)\equiv 0 \hbox{ mod } L(i)$. Since
$$L\circ t\equiv \left( a \left( \begin{array}{cc} -1 & -1 \\ 1 & -1 \end{array} \right)\right.\left| e \left( \begin{array}{c} 1 \\ 1 \end{array} \right) + \left( \begin{array}{c} a(1+i) \\ 0 \end{array} \right) \right) \hbox{ mod } L^2(i)$$
$$\hbox{and}\ \ \ w\circ L\equiv \left( a \left( \begin{array}{cc} -1 & -1 \\ 1 & -1 \end{array} \right)\right.\left| e \left( \begin{array}{c} 1 \\ 1 \end{array} \right) \right) \hbox{ mod } L^2(i),$$
we get $L\circ t\equiv w\circ L \hbox{ mod } L^2(i)$. The other types are similar.
\end{proof}

\section{Classification}\label{S:Class}

Let $E$ be an one-dimensional torus and $R_m$ be the group of $m$-th roots of unity acting  on $E$ by rotation around a base point. In \cite{M:Lattes}, it is shown that if $E/R_m\cong \pv^1$ then $m$ is equal to 2, 3, 4 or 6. Denote by $R_m^2$ the group
$$\left\{ \left( \begin{array}{cc} \theta^j & \\ & \theta^k \end{array} \right);\ \theta=e^{2\pi i/m},\ 0\le j,k<m\right\}.$$
Then obviously $E^2/R_m^2\cong \pv^1\times\pv^1$. We have the following

\begin{lem}\label{L:P1}
Let $L=(A|r)$ be an affine map on $E^2$ which induces a Latt\`es map on $\pv^2$ in the case $(\Lambda,G_i)$, $i=1,2,3,4,5$. If $A$ is of the form
$$\left( \begin{array}{cc} \alpha & \\ & \beta \end{array} \right)\ \hbox{ or }\ \left( \begin{array}{cc} & \alpha \\ \beta & \end{array} \right),$$
then $L$ induces a map on $\pv^1\times\pv^1$ of the form $(g,g)$, where $g$ is a Latt\`es map on $\pv^1$.
\end{lem}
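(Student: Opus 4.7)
The plan is to use the product decomposition $\Lambda = L(\tau) \oplus L(\tau)$, valid in each of the five cases (with $\tau$ depending on $i$), so that $E^2 = E \times E$ for $E = \mathbf{C}/L(\tau)$. Let $m = m_i$ be chosen so that $R_m$ acts on $E$ with $E/R_m \cong \pv^1$; thus $(m_1, \ldots, m_5) = (2, 3, 4, 6, 4)$. The diagonal action of $R_m \times R_m$ on $E \times E$ yields the quotient $q : E^2 \to (E/R_m)^2 \cong \pv^1 \times \pv^1$, and the goal is to show that $L$ commutes with $R_m \times R_m$ modulo $\Lambda$ and descends through $q$ to a map of the form $(g, g)$.

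In the diagonal case $A = \text{diag}(\alpha, \beta)$, the map $L$ splits literally as $L = L_1 \times L_2$, with $L_j(z) = \alpha_j z + r_j$ on $E$. First, I would verify, using the condition $e \equiv \omega e \bmod L(\tau)$ appearing in each of Propositions \ref{P:G1}--\ref{P:G5} (with $\omega$ a primitive $m_i$-th root), that $(1 - \omega^k) r_j \in L(\tau)$ for all $k$, so each $L_j$ descends to a Latt\`es map $g_j$ on $\pv^1 = E/R_m$. Next, I would argue that the constraints ``$A = aB$ with $B$ from the explicit list'' and ``$r \equiv e(1,1)^T \bmod \Lambda$'' force $\beta = \omega \alpha$ for some $\omega \in R_m$; combined with the translation compatibility, this forces the $R_m$-orbits of $L_1(z)$ and $L_2(z)$ in $E$ to coincide for every $z$, hence $g_1 = g_2 =: g$. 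Therefore $L$ induces $(g, g)$ on $\pv^1 \times \pv^1$. For the anti-diagonal case, I would write $L = u \circ L'$ with $L' = (uA \mid ur)$ diagonal; by the diagonal case $L'$ induces $(g, g)$, so $L$ descends to $\sigma \circ (g, g)$, where $\sigma$ is the swap. Equivalently, the iterate $L \circ L$ is diagonal with matrix $\alpha\beta \cdot I$, and descends strictly to the product $(g \circ g, g \circ g)$.

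The main obstacle will be the case-by-case verification that $g_1 = g_2$ given the many explicit forms of $B$ in the propositions. The verification reduces, for each matrix $B$, to checking that the ratio of its diagonal (or anti-diagonal) entries lies in $R_m$ and is compatible with the translation condition on $e$. The most delicate instance arises in Proposition \ref{P:G5}, where entries $\pm i$ occur and one must use the full strength of $e \equiv ie \bmod L(i)$ to conclude $[az + e] = [-iaz + e]$ in $E/R_4$; the key identity is $(1 + i)e \in L(i)$, obtained by multiplying the given $(1 - i)e \in L(i)$ by $i$ and using $i \cdot L(i) = L(i)$.
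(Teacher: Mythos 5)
Your proposal is correct and follows essentially the same route as the paper: reduce to the one-dimensional affine maps $x\mapsto ax+e$ on $E=\cv/L(\tau)$, use $aL(\tau)\subset L(\tau)$ together with $e\equiv\theta e \bmod L(\tau)$ to descend each factor to a Latt\`es map on $\pv^1=E/R_m$, observe that the two diagonal entries differ by an element of $R_m$ so both factors induce the same $g$, and absorb the swap in the antidiagonal case. The paper's proof is just a terser version of this (citing Milnor's Theorem 3.1 for the one-dimensional descent and noting that $l$ and $\theta^j\cdot l$ induce the same map on $\pv^1$), so there is no substantive difference.
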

\begin{proof}
By Propositions \ref{P:G2}-\ref{P:G5}, we know that $A$ is in fact of the form
$$\left( \begin{array}{cc} a & \\ & \theta^j a \end{array} \right)\ \hbox{ or }\ \left( \begin{array}{cc} & a \\ \theta^j a & \end{array} \right),\ \ \theta=e^{2\pi i/m},\ 0\le j<m,$$
where $m=2$ for $G_1$, $m=3$ for $G_2$, $m=4$ for $G_3$ and $G_5$, and $m=6$ for $G_4$. We also know that
$$r\equiv e\left( \begin{array}{c} 1 \\ 1 \end{array} \right) \hbox{ mod } L^2(\tau),\ \ e\equiv \theta e \hbox{ mod } L(\tau),$$
where $\tau$ is arbitrary for $G_1$ and $\tau=\theta$ for $G_2$, $G_3$, $G_4$ and $G_5$.

Denote by $l$ the affine map on $E$ given by $x\rightarrow ax+e$. By (\ref{E:A1}), we easily see that $aL(\tau)\subset L(\tau)$. And since $e\equiv \theta e \hbox{ mod } L(\tau)$, we can show, as in the proof of \cite[Theorem 3.1]{M:Lattes}, that $l$ induces a Latt\`es map $g$ on $\pv^1$, with deg($g$)$=|a|^2$.

Obviously $l$ and $\theta^j\cdot l$ induce the same map on $\pv^1$. Therefore $L$ induces a map on $\pv^1\times\pv^1$ of the form $(x,y)\rightarrow (g(x),g(y))$ or $(x,y)\rightarrow (g(y),g(x))$.
\end{proof}

By the above lemma, we have the following commutative diagram
$$\xymatrix{
\ E^2\  \ar[d]_L \ar[r]^(.4){\pi_1} & \ \pv^1\times\pv^1 \ar[d]_{(g,g)} \ar[r]^(.6){\pi_2} & \ \pv^2\  \ar[d]^f \\
\ E^2\  \ar[r]^(.4){\pi_1} & \ \pv^1\times\pv^1 \ar[r]^(.6){\pi_2} & \ \pv^2, }$$
where $L$ and $g$ are as in Lemma \ref{L:P1} and $f$ is the Latt\`es map on $\pv^2$ induced by $L$. As mentioned in the introduction, we will call such $f$ the \textit{square map} of $g$.

In \cite{U:Complex}, Ueda showed how to construct a holomorphic map $f$ on $\pv^2$ from a holomorphic map $g$ on $\pv^1$. We will call such $f$ an \textit{Ueda map}. A square map in cases $(\Lambda,G_i)$, $1\le i\le 4$, is an Ueda map, while a square map $f$ in the case $(\Lambda,G_5)$ is semi-conjugate to an Ueda map $h$, that is there exists a non-invertible holomorphic map $\varphi$ on $\pv^2$ such that $f\circ \varphi=\varphi\circ h$. We will explain more details about Ueda's construction and the semi-conjugacy in the next section. Note that an Ueda map is a holomorphic map which preserves an algebraic web associated to a smooth conic (see \cite{DJ:Webs}).

Denote by $A_i$, $1\le i\le 16$, the $i$-th matrix listed in Proposition \ref{P:G5}. It is easy to check that we have

$$A_1^2=A_4^2=A_5^2=A_8^2=2\left( \begin{array}{cc} 1 & \\ & 1 \end{array} \right),$$
$$\begin{array}{ll}
A_2^2=-2\left( \begin{array}{cc} & 1 \\ -1 & \end{array} \right), & A_3^2=2\left( \begin{array}{cc} & 1 \\ -1 & \end{array} \right),\\
A_6^2=-2i\left( \begin{array}{cc} & 1 \\ 1 & \end{array} \right), & A_7^2=2i\left( \begin{array}{cc} & 1 \\ 1 & \end{array} \right),
\end{array}$$
$$\begin{array}{llll}
A_9^2=(1+i)A_{15}, & A_{10}^2=(1-i)A_{13}, & A_{11}^2=(1-i)A_{16}, & A_{12}^2=(1+i)A_{14},\\
A_{13}^2=(1+i)A_{10}, & A_{14}^2=(1-i)A_{12}, & A_{15}^2=(1-i)A_9, & A_{16}^2=(1+i)A_{11},
\end{array}$$
$$A_9^3=A_{12}^3=A_{13}^3=A_{16}^3=2(1+i)\left( \begin{array}{cc} 1 & \\ & 1 \end{array} \right),$$
$$A_{10}^3=A_{11}^3=A_{14}^3=A_{15}^3=2(1-i)\left( \begin{array}{cc} 1 & \\ & 1 \end{array} \right).$$
And it is also easy to check that the form of $r$ is preserved under iterations.

Let $L_i$ be an affine map on $E^2$ of type $i)$, $1\le i\le 16$, as in Proposition \ref{P:G5} which induces a Latt\`es map $f_i$ on $\pv^2$. From the above discussion and Lemma \ref{L:P1}, we see that $f_i^2$ is a square map of a Latt\`es map on $\pv^1$ if $1\le i\le 8$ and that $f_i^3$ is a square map of a Latt\`es map on $\pv^1$ if $9\le i\le 16$. Note that a Latt\`es map in the case $(\Lambda,G_5)$ of types 1)-4) factors through a Latt\`es map in the case $(\Lambda,G_1)$ of the same type.

From Propositions \ref{P:G2}-\ref{P:G5} and the above discussion, we have the following

\begin{thm}\label{T:Lattes}
If $f$ is a Latt\`es map on $\pv^2$, then\\
$i)$ $f$ is a square map of a Latt\`es map on $\pv^1$ in cases $(\Lambda,G_2)$, $(\Lambda,G_3)$ and $(\Lambda,G_4)$, and it is an Ueda map;\\
$ii)$ either $f$ or $f^2$ is a square map of a Latt\`es map on $\pv^1$ in case $(\Lambda,G_1)$, and it is an Ueda map;\\
$iii)$ either $f$, $f^2$ or $f^3$ is a square map of a Latt\`es map on $\pv^1$ in case $(\Lambda,G_5)$, and it is semi-conjegate to an Ueda map.
\end{thm}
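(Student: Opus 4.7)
The strategy is simply to match each matrix form appearing in Propositions \ref{P:G2}--\ref{P:G5} against the hypothesis of Lemma \ref{L:P1}, and when a matrix fails to be of diagonal or anti-diagonal shape, to iterate until it does. The translation part $r$ and the lattice inclusion $A\Lambda \subset \Lambda$ must be tracked through the iteration to confirm that the iterated map is again an affine map of the form covered by the relevant proposition; in particular the form $r\equiv e\binom{1}{1}$ with $e\equiv \theta e \bmod L(\tau)$ is stable under $r\mapsto Ar+r$, because $A$ commutes with $\binom{1}{1}$ up to a factor that is a root of unity compatible with the congruence on $e$.

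For part $(i)$, Propositions \ref{P:G2}, \ref{P:G3} and \ref{P:G4} show that the linear part $A$ of $L$ is always of the diagonal or anti-diagonal form required by Lemma \ref{L:P1}. Hence $L$ itself already descends to a map of the form $(g,g)$ on $\pv^1\times\pv^1$, so $f$ is a square map. The Ueda-map assertion will follow from the general construction recalled in the next section, once one observes that for $G_2,G_3,G_4$ the quotient $E^2/G\to \pv^2$ factors through the symmetric product $\pv^1\times\pv^1/\mathfrak{S}_2\cong \pv^2$, which is exactly Ueda's picture.

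For part $(ii)$, the matrices 5)--8) of Proposition \ref{P:G1} are already diagonal or anti-diagonal, so Lemma \ref{L:P1} applies to $f$ directly. For the matrices 1)--4), a direct squaring (using $\binom{1\ 1}{1\ -1}^2 = 2I$, etc.) shows that $A^2$ is a scalar multiple of $I$ or of $\binom{0\ 1}{\pm 1\ 0}$; combined with the stability of the form of $r$ under $r\mapsto Ar+r$, this produces an affine map $L^2$ satisfying the hypothesis of Lemma \ref{L:P1}, so $f^2$ is a square map. Part $(iii)$ is the longest but follows the same pattern using Proposition \ref{P:G5}: types 17)--24) are handled by Lemma \ref{L:P1} for $f$; types 1)--8) satisfy $A^2\in\{\pm 2I,\,\pm 2\binom{0\ 1}{\pm 1\ 0},\,\pm 2i\binom{0\ 1}{1\ 0}\}$, so Lemma \ref{L:P1} applies to $f^2$; and types 9)--16) give either $A^3 = 2(1+i)I$ or $A^3 = 2(1-i)I$, so Lemma \ref{L:P1} applies to $f^3$. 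The only small subtlety is verifying that the translation part of $L^2$ (resp.\ $L^3$) continues to satisfy $r\equiv e\binom{1}{1}\bmod L^2(i)$ with $e\equiv i e \bmod L(i)$; this is a direct computation using the explicit $A$'s.

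The main obstacle, and the only place genuine content beyond bookkeeping is needed, is the assertion about Ueda maps and semi-conjugacy. For $G_1,G_2,G_3,G_4$ one must identify the square map with a Ueda map, i.e.\ verify that the factorization $E^2\to \pv^1\times\pv^1 \to \pv^2$ of the diagram following Lemma \ref{L:P1} coincides with Ueda's construction; and for $G_5$ one must exhibit the non-invertible holomorphic map $\varphi\colon\pv^2\to\pv^2$ implementing the semi-conjugacy. Both of these rest on properties of the quotient $E^2/G$ and are naturally postponed to Section \ref{S:Elliptic}, where the orbifold language makes the identification transparent; here we simply record the statements and defer the construction.
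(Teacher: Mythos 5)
Your proposal follows essentially the same route as the paper: apply Lemma \ref{L:P1} directly in the cases $(\Lambda,G_2)$, $(\Lambda,G_3)$, $(\Lambda,G_4)$ and to the diagonal/anti-diagonal types of $(\Lambda,G_1)$ and $(\Lambda,G_5)$, square or cube the remaining matrices explicitly to reduce to that shape, check that the form of $r$ persists under iteration, and defer the Ueda-map and semi-conjugacy identifications to the elliptic-function section. The one quibble is your stated reason for the stability of $r$ --- for the non-(anti)diagonal matrices $A\binom{1}{1}$ is not a root-of-unity multiple of $\binom{1}{1}$, and one must instead combine $a\in L(\tau)$ (from $A\Lambda\subset\Lambda$) with $e\equiv\theta e \bmod L(\tau)$ to see that $Ar+r\equiv e\binom{1}{1}$ modulo the lattice --- but the conclusion is correct and the paper likewise leaves this verification to the reader.
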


To study the case $(\Lambda,G_6)$, we make the following change of coordinates on $E^2$:
$$\left( \begin{array}{c} x \\ y \end{array} \right)=\left( \begin{array}{cc} -1 & \zeta^2 \\ 1 & \zeta \end{array} \right)\left( \begin{array}{c} x^\prime \\ y^\prime \end{array} \right).$$

In the new coordinates $(x^\prime,y^\prime)$, the group $(3,3)_0$ is represented by
$$\left\langle \left( \begin{array}{cc} -1 & -1 \\ 0 & 1 \end{array} \right), \left( \begin{array}{cc} 0 & 1 \\ 1 & 0 \end{array} \right) \right\rangle \ltimes L^2(\tau).$$

In the new coordinates, Proposition \ref{P:G6} can be rephrased as the following

\begin{prop}\label{P:G6-1}
If $L=(A|r)$ induces a Latt\`es map in the case $(\Lambda,G_6)$, then $A=aB$, where $B$ takes one of the following six forms:
$$\begin{array}{lll}
1)\ \left( \begin{array}{cc} 1 & 0 \\ 0 & 1 \end{array} \right), & 2)\ \zeta^{-1}\left( \begin{array}{cc} 1 & 1 \\ -1 & 0 \end{array} \right), & 3)\ \zeta\left( \begin{array}{cc} 0 & -1 \\ 1 & 1 \end{array} \right),\\
4)\ \left( \begin{array}{cc} -1 & -1 \\ 0 & 1 \end{array} \right), & 5)\ \zeta^{-1}\left( \begin{array}{cc} -1 & 0 \\ 1 & 1 \end{array} \right), & 6)\ \zeta\left( \begin{array}{cc} 0 & -1 \\ -1 & 0 \end{array} \right),
\end{array}$$
and
$$r\equiv e\left( \begin{array}{c} 1 \\ 1 \end{array} \right) \hbox{ mod } L^2(\tau),\ \ e\equiv -2e \hbox{ mod } L(\tau).$$
\end{prop}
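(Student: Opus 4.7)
The plan is to push the content of Proposition \ref{P:G6} through the linear change of coordinates $(x,y)^T = M(x',y')^T$ with
$$M = \begin{pmatrix} -1 & \zeta^2 \\ 1 & \zeta \end{pmatrix}.$$
A direct computation gives $\det M = -(\zeta+\zeta^2) = -i\sqrt{3} \ne 0$, and $M^{-1}$ sends the two $L(\tau)$-generators $\binom{-1}{1}$ and $\binom{\zeta^2}{\zeta}$ of $\Lambda_6$ to the standard basis vectors of $\cv^2$. Hence the change of variables identifies the lattice $\Lambda_6$ with $L^2(\tau)$, which is the setting of Proposition \ref{P:G6-1}.

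Next, I would compute $u' := M^{-1}uM$ and $v' := M^{-1}vM$ for the generators $u,v$ of $G_6$ and verify that they equal $\left(\begin{smallmatrix} -1 & -1 \\ 0 & 1 \end{smallmatrix}\right)$ and $\left(\begin{smallmatrix} 0 & 1 \\ 1 & 0 \end{smallmatrix}\right)$ respectively, giving the presentation of $G_6$ displayed just before Proposition \ref{P:G6-1}. The relations $\zeta^3=-1$ and $\zeta+\zeta^2=i\sqrt{3}$ are all that is needed.

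The matrix $A$ transforms to $B' := M^{-1}AM$. I would handle the six cases of Proposition \ref{P:G6} in turn: case 1) with $A=aI$ is immediate, and case 4) with $A = b\left(\begin{smallmatrix} 0 & 1 \\ 1 & 0\end{smallmatrix}\right)$ becomes $b\,u'$, matching entry 4) of the new list. The remaining four forms require only a brief $2\times 2$ computation using the same two identities above, after which one reads off entries 2), 3), 5), 6) of Proposition \ref{P:G6-1}.

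For the translation, set $r' = M^{-1}r$. In each of the six cases, the two congruences on $r$ mod $\Lambda_6$ from Proposition \ref{P:G6} transport verbatim into two congruences on $r'$ mod $L^2(\tau)$. Equivalently, one may simply check directly that the Latt\`es condition (\ref{E:L/G}) against the new generators $u'$ and $v'$ forces the components of $r' = (p,q)^T$ to satisfy $p \equiv q$ and $2p+q \equiv 0$ mod $L(\tau)$, i.e.\ $r' \equiv e\binom{1}{1}$ mod $L^2(\tau)$ with $e \equiv -2e$ mod $L(\tau)$; a case-by-case reduction then confirms that every one of the six systems of congruences in Proposition \ref{P:G6} collapses to this uniform form in the new coordinates.

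No new idea beyond the change of basis is required; the only mild obstacle is the bookkeeping across six separate conjugations and the rewriting of the corresponding translation conditions, but each is a short linear-algebra calculation with the fixed matrix $M$.
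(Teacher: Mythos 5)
Your proposal is correct and follows essentially the same route as the paper: conjugate the data of Proposition \ref{P:G6} by the change-of-basis matrix $M=\left(\begin{smallmatrix} -1 & \zeta^2 \\ 1 & \zeta \end{smallmatrix}\right)$, which carries $\Lambda_6$ to $L^2(\tau)$ and the generators of $G_6$ to $\left(\begin{smallmatrix} -1 & -1 \\ 0 & 1 \end{smallmatrix}\right)$ and $\left(\begin{smallmatrix} 0 & 1 \\ 1 & 0 \end{smallmatrix}\right)$, then read off the six conjugated matrices and transport the congruences on $r$ (the paper records exactly the images of $\binom{e}{f}$, $\binom{f}{e}$, $\binom{\zeta^2 f}{\zeta^{-2}e}$, $\binom{\zeta^{-2}f}{\zeta^2 e}$ under $M^{-1}$ and leaves the same case-by-case check to the reader). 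Your derived conditions $p\equiv q$ and $2p+q\equiv 0 \bmod L(\tau)$ agree with the paper's $r\equiv e\binom{1}{1}$, $e\equiv -2e$.
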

\begin{proof}
The $A$ part is straightforward. As for $r$, we have
$$\begin{array}{ll}
\left( \begin{array}{c} e \\ f \end{array} \right)\rightarrow \left( \begin{array}{c} e^\prime \\ f^\prime \end{array} \right), & \left( \begin{array}{c} f \\ e \end{array} \right)\rightarrow \left( \begin{array}{c} -(e^\prime+f^\prime) \\ f^\prime \end{array} \right),\\
\left( \begin{array}{c} \zeta^2 f \\ \zeta^{-2} e \end{array} \right)\rightarrow \left( \begin{array}{c} f^\prime \\ e^\prime \end{array} \right), & \left( \begin{array}{c} \zeta^{-2} f \\ \zeta^2 e \end{array} \right)\rightarrow \left( \begin{array}{c} e^\prime \\ -(e^\prime+f^\prime) \end{array} \right).
\end{array}$$
The rest is then easy to check.
\end{proof}

Denote by $B_i$, $1\le i\le 6$, the $i$-th matrix listed in the above proposition. It is obvious that
$$B_1=B_2^3=B_3^3=B_4^2=B_5^6=B_6^6=\left( \begin{array}{cc} 1 & \\ & 1 \end{array} \right),$$
and it is not hard to check that the form of $r$ is preserved under iterations.

Let $L$ be as in Proposition \ref{P:G6-1}. We have that either $L$, $L^2$, $L^3$ or $L^6$ is of the form $(l,l)$, where $l:x\rightarrow ax+e$, with $3e\equiv 0 \hbox{ mod } L(\tau)$, is an affine map on $E$. 

Let $\eta:E\rightarrow \check{\pv}^2$ be an embedding of $E$ into $\check{\pv}^2$ and denote the image by $C$, which is a smooth cubic. Since $3e\equiv 0 \hbox{ mod } L(\tau)$, the map $l$ preserves collinearity. Thus $(l,l)$ induces a holomorphic map on $\pv^2$, through $\phi:E\times E\rightarrow \pv^2$, which preserves the web associated to $C$. Here $\phi$ maps $(x,y)\in E\times E$ to the line joining $\eta(x)$ and $\eta(y)$.

From the above discussion, we have the following

\begin{thm}\label{T:Lattes1}
If $f$ is a Latt\`es map on $\pv^2$ in the case $(\Lambda,G_6)$, then either $f$, $f^2$, $f^3$ or $f^6$ is a holomorphic map preserving an algebraic web associated to a smooth cubic.
\end{thm}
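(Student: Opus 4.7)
My plan is to bootstrap the case-by-case data of Proposition \ref{P:G6-1} into a geometric statement about plane cubics, exploiting the paper's earlier observation that iterates of $L$ quickly become scalar.

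First I would verify the iteration counts directly: the six matrices $B_i$ in Proposition \ref{P:G6-1} satisfy $B_1=I$, $B_4^2=I$, $B_2^3=B_3^3=I$, and $B_5^6=B_6^6=I$. Hence for $k\in\{1,2,3,6\}$ depending on the type, $L^k=(a^k I\,|\,r^{(k)})$ where $r^{(k)}=(I+A+\cdots+A^{k-1})r$. A direct computation shows the form $r^{(k)}\equiv e^{(k)}(1,1)^T \bmod L^2(\tau)$ with $3e^{(k)}\equiv 0 \bmod L(\tau)$ is preserved under each step of the iteration, because each matrix $B_i$ maps $(1,1)^T$ to a scalar multiple of $(1,1)^T$ modulo $L(\tau)$. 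Thus $L^k$ splits as the diagonal map $(l,l)$ on $E\times E$, where $l(x)=a^k x + e^{(k)}$ is an affine self-map of the one-dimensional torus $E=\cv/L(\tau)$.

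Next I would bring in the geometry. Embed $E$ as a smooth cubic $C\subset\check{\pv}^2$ via some $\eta:E\to\check{\pv}^2$; the classical group law on a plane cubic says that $\eta(x_1),\eta(x_2),\eta(x_3)$ are collinear in $\check{\pv}^2$ iff $x_1+x_2+x_3\equiv c\bmod L(\tau)$ for some fixed $c$ determined by $\eta$. Define $\phi:E\times E\to\pv^2$ by sending $(x,y)$ to the point of $\pv^2$ dual to the line through $\eta(x)$ and $\eta(y)$, i.e.\ to the intersection of the two lines $\eta(x)$ and $\eta(y)$. I would then argue that $(l,l)$ descends through $\phi$: the key point is that if $x_1+x_2+x_3\equiv c$, then
$$l(x_1)+l(x_2)+l(x_3)=a^k(x_1+x_2+x_3)+3e^{(k)}\equiv a^k c \bmod L(\tau),$$
a value depending only on the triple's sum, so concurrent triples go to concurrent triples and $(l,l)$ permutes the fibers of $\phi$ consistently. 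This produces a holomorphic map $f$ on $\pv^2$ with $\phi\circ(l,l)=f\circ\phi$. To check web preservation, parametrize the points of the line $\eta(z)$ of the web as $\phi(x,y)$ with $x+y\equiv c-z$; the image under $(l,l)$ satisfies $l(x)+l(y)=a^k(c-z)+2e^{(k)}$, which depends only on $z$, hence $f(\eta(z))=\eta(z'')$ for $z''=c-a^k(c-z)-2e^{(k)}$.

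The main obstacle is the descent in the previous paragraph: confirming that $(l,l)$ really descends to a holomorphic map on $\pv^2$, not just a rational one, in spite of $\phi$ being generically $6$-to-$1$ (the fiber is the $S_3$-orbit on ordered pairs drawn from a concurrent triple). The crucial arithmetic input is $3e^{(k)}\equiv 0\bmod L(\tau)$, which is exactly what Proposition \ref{P:G6-1} produces and what Step 1 preserves under iteration; without this, the collinearity relation would not be respected. Granted this, properness of $\phi$ and smoothness of $\pv^2$ upgrade the descended set-theoretic map to a holomorphic one, completing the proof.
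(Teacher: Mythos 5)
Your proposal is correct and follows essentially the same route as the paper: compute the orders of the matrices $B_i$ in Proposition \ref{P:G6-1} to get the exponent $k\in\{1,2,3,6\}$, check that the form of $r$ persists under iteration so that $L^k=(l,l)$ with $3e^{(k)}\equiv 0 \hbox{ mod } L(\tau)$, and use this to see that $l$ preserves collinearity on the cubic $\eta(E)\subset\check{\pv}^2$ so that $(l,l)$ descends through $\phi$ to a web-preserving holomorphic map (your descent discussion is in fact more detailed than the paper's one-line justification). The only small point to fix is the normalization: "$l(x_1)+l(x_2)+l(x_3)\equiv a^k c$ depends only on the triple's sum" does not by itself give collinearity of the image triple unless $a^kc\equiv c$, so you should choose $\eta$ (e.g.\ the Weierstrass embedding $u\mapsto[\wp(u):\wp'(u):1]$ the paper uses) so that the collinearity constant is $c=0$, after which your computation closes.
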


Combining Theorems \ref{T:Lattes} and \ref{T:Lattes1}, we get Theorem \ref{T:Main}.

\section{Elliptic Functions and Parabolic Orbifolds}\label{S:Elliptic}

Recall that $E/R_m\cong \pv^1$ for $m=2,3,4,6$. The map $\pi:E\rightarrow E/R_m$ can be described in terms of classical elliptic function theory. (For backgrounds on elliptic functions, see e.g. \cite{Ch:Elliptic}.)

For $m=2$, we can identify $\pi$ with the Weierstrass function $\wp$ associated to $L(\tau)$. For $m=3$, we can identify $\pi$ with $\wp^\prime$ associated to $L(\zeta)$. For $m=4$, we can identify $\pi$ with $\wp^2$ associated to $L(i)$. For $m=6$, we can identify $\pi$ with $\wp^{\prime 2}$ (or $\wp^3$) associated to $L(\zeta)$. We will denote by $\pi_1$ the map $(\pi,\pi):E^2\rightarrow \pv^1\times \pv^1$.

It is well-known that
$$\wp^{\prime 2}=4\wp^3-g_2\wp-g_3,$$
where $g_2$ and $g_3$ are two invariants of $\wp$, and that
$$\wp(u+v)=\frac{1}{4}\left( \frac{\wp^\prime(u)-\wp^\prime(v)}{\wp(u)-\wp(v)} \right)^2-\wp(u)-\wp(v),$$
where $u\not\equiv v \hbox{ mod } L(\tau)$.

From the above two identities, we have the following two identities:
$$\wp(u+v)+\wp(u-v)=\frac{(2\wp(u)\wp(v)-\frac{1}{2}g_2)(\wp(u)+\wp(v))-g_3}{(\wp(u)-\wp(v))^2},$$
$$\wp(u+v)\wp(u-v)=\frac{(\wp(u)\wp(v)+\frac{1}{4}g_2)^2+g_3(\wp(u)+\wp(v))}{(\wp(u)-\wp(v))^2}.$$

For $L(i)$, we have $g_3=0$. For simplicity, we will also choose $g_2=4$. Then
\begin{equation}\label{E:E1}
\wp^{\prime 2}=4(\wp^3-\wp)
\end{equation}
and for $u\not\equiv v \hbox{ mod } L(i)$, we have
\begin{equation}\label{E:E2}
\wp(u+v)+\wp(u-v)=\frac{2(\wp(u)\wp(v)-1)(\wp(u)+\wp(v))}{(\wp(u)-\wp(v))^2},
\end{equation}
\begin{equation}\label{E:E3}
\wp(u+v)\wp(u-v)=\frac{(\wp(u)\wp(v)+1)^2}{(\wp(u)-\wp(v))^2}.
\end{equation}

We now recall Ueda's construction in \cite{U:Complex}. Let $g$ be a holomorphic map on $\pv^1$. A holomorphic map $f$ on $\pv^2$ is constructed from $g$ as follows. Let $C$ be a smooth conic in $\pv^2$, which we identify with $\pv^1$. For a point $P\in \pv^2$, let $l_1$ and $l_2$ be the tangent lines to $C$ which pass through $P$ and let $Q_1$ and $Q_2$ be the points of contact. (Let $Q_1=Q_2=P$ if $P\in C$.) Let $l_1^\prime$ and $l_2^\prime$ be the tangent lines to $C$ at $g(Q_1)$ and $g(Q_2)$ and define $f(P)$ to be the intersection point of $l_1^\prime$ and $l_2^\prime$. (Let $f(P)=f(Q_1)=f(Q_2)$ if $f(Q_1)=f(Q_2)$.) It is easy to see that $f$ preserves the algebraic web associated to the dual curve of $C$.

Denote by $\iota$ the automorphism of $\pv^1\times \pv^1$ defined by $\iota:(Q_1,Q_2)\rightarrow (Q_2,Q_1)$. Then we have $(\pv^1\times \pv^1)/\langle\iota\rangle\cong \pv^2$. The map $\pi_2:\pv^1\times \pv^1\rightarrow \pv^2$ is a double cover having a smooth conic $C=\pi_2(\Delta)$ as the branch locus, where $\Delta=\{(Q_1,Q_2)\in \pv^1\times \pv^1|Q_1=Q_2\}$. More explicitly, the map
$$\pi_2:([\xi:\eta],[\xi^\prime:\eta^\prime])\in \pv^1\times \pv^1\rightarrow [x:y:z]\in \pv^2$$
is given by a triple of bilinear symmetric forms of $(\xi,\eta)$ and $(\xi^\prime,\eta^\prime)$. Here we choose
\begin{equation}\label{E:Pi2}
[x:y:z]=\pi_2([\xi:\eta],[\xi^\prime:\eta^\prime])=[\xi\eta^\prime+\xi^\prime\eta:\xi\xi^\prime:\eta\eta^\prime].
\end{equation}
Then $C=\pi_2(\Delta)=\{x^2-4yz=0\}$.

In cases $(\Lambda,G_i)$, $i=1,2,3,4$, the map $\Psi:E^2\rightarrow \pv^2$ is the composition $\pi_2\circ \pi_1$. Denote by $D\subset \pv^2$ the branch locus of $\Psi$. For each irreducible component $D_j$ of $D$, we associate with a pair $(d_j,r_j)$, where $d_j$ is the degree of $D_j$ and $r_j$ is the ramification index of $\Psi$ along $D_j$. Then for cases $(\Lambda,G_i)$, $i=1,2,3,4$, we have the following list of $(d_j,r_j)$ (see \cite{KTY:II}):
\begin{equation}\label{E:D1}
\begin{array}{ll}
(\Lambda,G_1)\ [m=2]: & \{(1,2),(1,2),(1,2),(1,2),(2,2)\}\\
(\Lambda,G_2)\ [m=3]: & \{(1,3),(1,3),(1,3),(2,2)\}\\
(\Lambda,G_3)\ [m=4]: & \{(1,2),(1,4),(1,4),(2,2)\}\\
(\Lambda,G_4)\ [m=6]: & \{(1,2),(1,3),(1,6),(2,2)\}
\end{array}
\end{equation}

Let $\varphi:\pv^2\rightarrow \pv^2$ be the map $[x:y:z]\mapsto [x^2:(y+\alpha z)^2:(y-\alpha z)^2]$, $\alpha\in \cv^\star$. Then $C=\{x^2-4yz=0\}$ is mapped to $\{\alpha x-y+z=0\}$. For simplicity, we choose $\alpha=1$. Thus, we have
\begin{equation}\label{E:Pi3}
\varphi:[x:y:z]\rightarrow [x^2:(y+z)^2:(y-z)^2]\ \hbox{ and } \varphi(C)=\{x-y+z=0\}.
\end{equation}
The map $\varphi$ provides the semi-conjugacy mentioned in the last section. In the case $(\Lambda,G_5)$, the map $\Psi:E^2\rightarrow \pv^2$ is the composition $\varphi\circ \pi_2\circ \pi_1$, and we have the following list of $(d_j,r_j)$ (see \cite{KTY:II}):
\begin{equation}\label{E:D2}
(\Lambda,G_5):\ \ \{(1,2),(1,2),(1,2),(1,2),(1,2),(1,2)\}.
\end{equation}

As we have seen in the last section, the case $(\Lambda,G_6)$ is quite different. Embed $E$ into $\check{\pv}^2$ by $\eta:u\rightarrow [\wp(u):\wp^\prime(u):1]$. Since $(u,v)\in E^2$ corresponds to the line joining $\eta(u)$ and $\eta(v)$, the map $\Psi:E^2\rightarrow \pv^2$ is given by
\begin{equation}\label{E:Psi}
\Psi:(u,v)\in E^2\rightarrow [\wp^\prime(v)-\wp^\prime(u):\wp(u)-\wp(v):\wp^\prime(u)\wp(v)-\wp(u)\wp^\prime(v)].
\end{equation}
The branch locus $D$ of $\Psi$ is the dual curve of $\eta(E)\subset \check{\pv}^2$, which is irreducible and of degree 6. The ramification index of $\Psi$ along $D$ is 2 (see \cite{KTY:II}). Therefore, we have
\begin{equation}\label{E:D3}
(\Lambda,G_6):\ \ \{(6,2)\}.
\end{equation}

A holomorphic map $f$ on $\pv^k$ is said to be \textit{critically finite}, if every irreducible component of the critical set $C_f$ of $f$ is periodic or pre-periodic. (For dynamical properties of such maps, see e.g. \cite{U:Critical},\cite{J:Critical},\cite{R:Critical}.) It is not hard to show that Latt\`es maps are critically finite (see \cite[Lemma 3.4]{M:Lattes},\cite[Lemma 5.1]{Du:Lattes}). Moreover, the post-critical set $V_f$ of a Latt\`es map $f$ is precisely equal to the branch locus $D$ of the map $\Psi:E^k\rightarrow \pv^k$.

We now introduce the notion of ``orbifold", as considered by Thurston (\cite{T:Comb}). (Note that our notion here is more restrictive than the general notion.)

Let $X$ be a complex manifold and denote by $\mathcal{H}(X)$ the space of irreducible analytic subvarieties of codimension 1 in $X$. Let $r$ be a function defined on $\mathcal{H}(X)$ with values in $\mathbf{N}^+$, which is equal to 1 outside a locally finite family of analytic subvarieties of $X$. We call the pair $(X,r)$ an \textit{orbifold}. We say that the orbifold $(X,r)$ is \textit{parabolic}, if there exists a ramified covering $f:X\rightarrow X$ such that
\begin{equation}\label{E:Parabolic}
r(f(H))=m_f(H)\cdot r(H)
\end{equation}
for every $H\in \mathcal{H}(X)$, where $m_f(H)$ is the multiplicity of $f$ along $H$ (i.e. the multiplicity of $f$ at a generic point of $H$). (Note that $m_f(H)=1$ if $H\not\subset C_f$.) We have the following

\begin{lem}\label{L:Orbifold}
If $f$ is a Latt\`es map on $\pv^k$, then there exists a parabolic orbifold $(\pv^k,r)$ satisfying (\ref{E:Parabolic}).
\end{lem}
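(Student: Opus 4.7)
The plan is to take the orbifold weight $r$ to be the ramification profile of the quotient map $\Psi:E^k\to\pv^k$ itself, and then to deduce (\ref{E:Parabolic}) from the commutative diagram $f\circ\Psi=\Psi\circ L$ by a chain-rule calculation of multiplicities along hypersurfaces. Explicitly, for $H\in\mathcal{H}(\pv^k)$ I would pick any irreducible component $H'$ of $\Psi^{-1}(H)$ and set $r(H)$ equal to the order of the stabilizer in the point group $G$ of a generic point of $H'$, equivalently the ramification index $\text{mult}_{H'}(\Psi)$. Because $\Psi:E^k\to E^k/G\cong\pv^k$ is Galois, $G$ permutes the components of each fibre transitively and the stabilizer order is independent of the chosen $H'$, so $r$ is well-defined. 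It equals $1$ precisely when $H$ is not a component of the branch locus $D$, and since $D$ has only finitely many irreducible components (as enumerated in (\ref{E:D1})--(\ref{E:D3}) for $k=2$), $r$ differs from $1$ on only a locally finite family.

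To verify (\ref{E:Parabolic}), I would fix any $H\in\mathcal{H}(\pv^k)$, choose a component $H'$ of $\Psi^{-1}(H)$, and compute the multiplicity of each side of $f\circ\Psi=\Psi\circ L$ along $H'$. The chain rule for multiplicities of finite holomorphic maps along hypersurfaces gives
$$\text{mult}_{H'}(f\circ\Psi)=\text{mult}_{H'}(\Psi)\cdot m_f(\Psi(H'))=r(H)\cdot m_f(H).$$
On the other hand, the linear part of the affine map $L$ has determinant of modulus $d_f\neq 0$ by (\ref{E:A2}), so $L$ is everywhere \'etale on $E^k$ and contributes multiplicity $1$ along any hypersurface; moreover $L(H')$ is an irreducible hypersurface with $\Psi(L(H'))=f(\Psi(H'))=f(H)$, so it is a component of $\Psi^{-1}(f(H))$ and therefore
$$\text{mult}_{H'}(\Psi\circ L)=\text{mult}_{H'}(L)\cdot\text{mult}_{L(H')}(\Psi)=1\cdot r(f(H))=r(f(H)).$$
Equating the two expressions gives the parabolicity identity $r(f(H))=m_f(H)\cdot r(H)$.

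The argument is essentially formal once the setup is in place. The only point that needs any care is the well-definedness of $r$, which rests on the Galois property of $\Psi:E^k\to E^k/G\cong\pv^k$; beyond this bookkeeping I do not anticipate a genuine obstacle.
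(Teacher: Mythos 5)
Your proposal is correct and follows essentially the same route as the paper: define $r$ by the ramification indices of $\Psi$ along the components of the branch locus, note that $L$ is \'etale since its linear part is invertible, and compare local degrees (equivalently, multiplicities along hypersurfaces) on the two sides of $f\circ\Psi=\Psi\circ L$. Your extra remark on well-definedness via the Galois property of $\Psi$ is a point the paper leaves implicit, but the argument is the same.
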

\begin{proof}
Assume that $f$ is induced by $L:E^k\rightarrow E^k$, through $\Psi:E^k\rightarrow \pv^k$. Let $D$ be the ramification locus of $\Psi$ and denote by $r_\Psi(H)$ the ramification index of $\Psi$ along an irreducible component $H$ of $D$. We define the function $r$ as follows:
$$r(H)=\left\{ \begin{array}{ll} r_\Psi(H), & \ \ H\subset D;\\ 1, & \ \ H\not\subset D. \end{array} \right.$$

For any $H$, choose generic $z_0\in H$, $z_1\in f(H)$ and $\upsilon_0,\upsilon_1\in E^k$ such that the following diagram commutes:
$$\xymatrix{
\ \upsilon_0\  \ar@{|->}[d]_\Psi \ar@{|->}[r]^L & \ \upsilon_1\  \ar@{|->}[d]^\Psi \\
\ z_0\  \ar@{|->}[r]^f & \ z_1. }$$

Denote by $d_\phi(z)$ the local degree of a map $\phi$ at a point $z$. Then we have $d_L(\upsilon)=1$ for every $\upsilon\in E^k$. Therefore, we get
$$d_\Psi(\upsilon_1)=d_f(z_0)\cdot d_\Psi(\upsilon_0).$$
By the definitions of $m_f(H)$ and $r(H)$, we then have (\ref{E:Parabolic}).
\end{proof}

\section{Examples}\label{S:Examples}

In this section, we give some typical examples of Latt\`es maps in cases $(\Lambda,G_i)$, $1\le i\le 5$. For each map, we will also give the ``orbifold portrait", which describes how each irreducible critical component is mapped under iterations along with the ramification index of each irreducible post-critical component.

We first give several examples of Ueda maps. If $f$ is an Ueda map induced by a Latt\`es map $g$ on $\pv^1$, then the orbifold portrait of $f$ is decided by the orbifold portrait of $g$. Let $C$ be the smooth conic in the Ueda construction and denote by $l_p$ the tangent line at $p\in C$. Then we know that $l_{g(p)}=f(l_p)$. In our chosen coordinates (\ref{E:Pi2}), $l_p$ at $p=[c:1]$ is given by $\{cx-y-c^2z=0\}$ and $l_p$ at $p=[1:0]$ is given by $\{z=0\}$.

In the case $(\Lambda,G_1)$, from $g:[\xi:\eta]\rightarrow [\xi^2+\eta^2:2i\xi\eta]$ (when $a=1+i$), we get
\begin{equation}\label{E:f1}
f:[x:y:z]\longrightarrow [2ix(y+z):x^2+(y-z)^2:-4yz],
\end{equation}
with the orbifold portrait:
$$\xymatrix{
\ \ \{y-z=0\} \ar[r] & *{\ \{x^2-4yz=0\}(2)\ \ \ \ \ \ \ \ \ \ \ \ \ \ \ \ \ \ \ \ \ \ } \ar@(ur,dr) & &\ \ \ \ \ \ }$$
$$\xymatrix@R=2pt{
\{x-y-z=0\} \ar[r] & \{x-iy+iz=0\}(2) \ar[dr]!(0,-1) \\
& & \{y=0\}(2) \ar[r] & *{\ \{z=0\}(2),} \ar@<-0.5ex>@(dr,dl) \\
\{x+y+z=0\} \ar[r] & \{x+iy-iz=0\}(2) \ar[ur]!(0,1) }$$
where $(\cdot)$ is the ramification index of the map along each irreducible post-critical component.

In the case $(\Lambda,G_2)$, from $g:[\xi:\eta]\rightarrow [i(\xi^3+3\xi\eta^2):\sqrt{3}(3\xi^2\eta+\eta^3)]$ (when $a=\sqrt{3}i$), we get
$$f:[x:y:z]\longrightarrow [\sqrt{3}ix(x^2+3(y+z)^2):-y(3x^2+(y-3z)^2):3z(3x^2+(3y-z)^2)],$$
with the orbifold portrait:
$$\xymatrix{
\{x^2+(3y-z)(y-3z)=0\} \ar[r] & *{\ \{x^2-4yz=0\}(2)\ \ \ \ \ \ \ \ \ \ \ \ \ \ \ \ \ \ \ \ \ \ } \ar@(ur,dr) &\ \ \ \ \ \ }$$
$$\xymatrix@R=2pt{
\ \ \ \ \ \ \ \ \ \ \{x-y-z=0\} \ar[r] & \{\sqrt{3}ix-3y+z=0\}(3) \ar[dr]!(0,-1) \\
& & *{\ \{z=0\}(3).} \ar@<-0.5ex>@(dr,dl) \\
\ \ \ \ \ \ \ \ \ \ \{x+y+z=0\} \ar[r] & \{\sqrt{3}ix+3y-z=0\}(3) \ar[ur]!(0,1) }$$

In the case $(\Lambda,G_3)$, from $g:[\xi:\eta]\rightarrow [(\xi+\eta)^2:-4\xi\eta]$ (when $a=1+i$), we get
$$f:[x:y:z]\longrightarrow [-4(x(y+z)+4yz):x^2+y^2+z^2+2(xy+yz+xz):16yz],$$
with the orbifold portrait:
$$\xymatrix{
\ \ \{y-z=0\} \ar[r] & *{\ \{x^2-4yz=0\}(2)\ \ \ \ \ \ \ \ \ \ \ \ \ \ \ \ \ \ \ \ \ \ } \ar@(ur,dr) & &\ \ \ \ }$$
$$\xymatrix{
\{x-y-z=0\} \ar[r] & \{x+y+z=0\}(2) \ar[r] & \{y=0\}(4) \ar[r] & *{\ \{z=0\}(4).} \ar@<-0.5ex>@(dr,dl) }$$

\vspace{4mm}

In the case $(\Lambda,G_4)$, from $g:[\xi:\eta]\rightarrow [-\xi(\xi+3\eta)^2:3\eta(3\xi+\eta)^2]$ (when $a=\sqrt{3}i$), we get
\begin{flushleft}
$f:[x:y:z]\longrightarrow$
\end{flushleft}
\begin{flushright}
$[-3(x(x+3(y+z))^2+96(x+y+z)yz):y(3x+y+9z)^2:9z(3x+9y+z)^2],$
\end{flushright}
with the orbifold portrait:
$$\xymatrix{
\{(x+3(y+z))^2-64yz=0\} \ar[r] & *{\ \{x^2-4yz=0\}(2)\ \ \ \ \ \ \ \ \ \ \ \ \ \ \ \ \ \ \ \ \ \ } \ar@(ur,dr) & }$$
$$\xymatrix{
\ \ \ \ \ \ \ \ \ \ \ \ \ \{x-y-z=0\} \ar[r] & \{3x+9y+z=0\}(3) \ar[r] & *{\ \{z=0\}(6)} \ar@<-0.5ex>@(dr,dl) }$$
$$\xymatrix{
\ \ \ \ \{3x+y+9z=0\} \ar[r] & *{\ \{y=0\}(2).} \ar@<-0.5ex>@(dr,dl) & &\ \ \ \ \ \ \ \ \ \ \ \ }$$

\vspace{4mm}

Back to the case $(\Lambda,G_1)$, we now give an example of Latt\`es map $f$ such that $f^2$ is an Ueda map. The map is of type 1) with $a=1$. It is obtained by expressing $\wp(u+v)+\wp(u-v)$ and $\wp(u+v)\wp(u-v)$ in terms of $\wp(u)+\wp(v)$ and $\wp(u)\wp(v)$, using (\ref{E:E2}) and (\ref{E:E3}). We have
\begin{equation}\label{E:f2}
f:[x:y:z]\longrightarrow [2x(y-z):(y+z)^2:x^2-4yz],
\end{equation}
with the orbifold portrait:
$$\xymatrix@R=3pt@C-5pt{
\ \ \ \ \ \{y+z=0\} \ar[r] & *{\ \{y=0\}(2)} \ar[dr]!(0,-2) \\
\{x+y-z=0\} \ar[r] & \{x+y+z=0\}(2) \ar[r] & \{x^2-4yz=0\}(2) \ar@/^/[r]!(0,-2) & \{z=0\}(2). \ar@/^/[l]!(0,2) \\
\{x-y+z=0\} \ar[r] & \{x-y-z=0\}(2) \ar[ur]!(0,2) }$$
One readily checks that $f^2$ is an Ueda map with the orbifold portrait:
$$\xymatrix@R=3pt{
\ \ \ \ \ \ \ \ \ \ \ \ \ \ \ \ \ \ \ \ \ \ \{y+z=0\} \ar[dr]!(-10,-1.5) \\
\ \ \ \ \ \ \ \ \ \ \ \ \ \ \ \ \ \{x+y-z=0\} \ar[r] & *{\ \{x^2-4yz=0\}(2)\ \ \ \ \ \ \ \ \ \ \ \ \ \ \ \ \ \ \ \ \ \ } \ar@(ur,dr) \\
\ \ \ \ \ \ \ \ \ \ \ \ \ \ \ \ \ \{x-y+z=0\} \ar[ur]!(-10,1.5) }$$
$$\xymatrix@R=1pt{
\ \ \ \ \ \ \ \ \ \ \ \ \ \ \ \ \{x+i(y-z)=0\} \ar[dr]!(0,-0.5) \\
& \{y=0\}(2) \ar[dddr]!(-9,1) \\
\ \ \ \ \ \ \ \ \ \ \ \ \ \ \ \ \{x-i(y-z)=0\} \ar[ur]!(0,0.5) \\
\{x-(y-z)+\sqrt{2}(y+z)=0\} \ar[dr]!(0,-2) \\
& \{x+y+z=0\}(2) \ar[r] & *{\ \{z=0\}(2).} \ar@<-0.5ex>@(dr,dl) \\
\{x-(y-z)-\sqrt{2}(y+z)=0\} \ar[ur]!(0,2) \\
\{x+(y-z)+\sqrt{2}(y+z)=0\} \ar[dr]!(0,-2) \\
& \{x-y-z=0\}(2) \ar[uuur]!(-9,-1) \\
\{x+(y-z)-\sqrt{2}(y+z)=0\} \ar[ur]!(0,2) }$$

Through the semi-conjugation (\ref{E:Pi3}), maps (\ref{E:f1}) and (\ref{E:f2}) induce Latt\`es maps in the case $(\Lambda,G_5)$. From (\ref{E:f1}), we get
\begin{equation}\label{E:f3}
f:[x:y:z]\longrightarrow [-4xy:(x-y+2z)^2:(x+y)^2],
\end{equation}
with the orbifold portrait:
$$\xymatrix{
\ \ \ \ \ \ \ \ \{x+y=0\} \ar[r] & \{z=0\}(2) \ar[r] & *{\ \{x-y+z=0\}(2)\ \ \ \ \ \ \ \ \ \ \ \ \ \ \ \ \ \ \ \ \ \ \ } \ar@(ur,dr) }$$
$$\xymatrix{
\ \ \ \{x-y=0\} \ar[r] & \{x+z=0\}(2) \ar[r] & *{\ \{y-z=0\}(2)\ \ \ \ \ \ \ \ \ \ \ \ \ \ \ \ \ \ } \ar@(ur,dr) }$$
$$\xymatrix{
\{x-y+2z=0\} \ar[r] & \{y=0\}(2) \ar[r] & *{\ \{x=0\}(2).} \ar@<-0.5ex>@(dr,dl) & \ \ \ \ \ \ \ \ \ \ \ \ \ \ \ \ \ \ \ \ \ \ }$$

\vspace{4mm}

From (\ref{E:f2}), we get
\begin{equation}\label{E:f4}
f:[x:y:z]\longrightarrow [4xz:(x+z)^2:(x-2y+z)^2],
\end{equation}
with the orbifold portrait:
$$\xymatrix@R=6pt{
\ \ \ \ \ \ \ \{x+z=0\} \ar[r] & \{y=0\}(2) \ar[r] & \{y-z=0\}(2) \ar@/^5pc/[d] \\
\ \ \ \ \ \ \ \{x-z=0\} \ar[r] & \{x-y=0\}(2) \ar[r] & \{x-y+z=0\}(2) \ar@/^5pc/[u] \\
\{x-2y+z=0\} \ar[r] & \{z=0\}(2) \ar[r] & *{\ \{x=0\}(2).} \ar@<-0.5ex>@(dr,dl) }$$

\vspace{4mm}

Note that maps (\ref{E:f3}) and (\ref{E:f4}) are conjugate to $f_1$ and $f_2$ in \cite[Proposition 5.2]{Du:Lattes}, respectively. And $f_3$ in \cite[Proposition 5.2]{Du:Lattes} is of type 13) with $a=1$ (thus $f_3^3$ is semi-conjugate to an Ueda map).

\end{document}